\title{Phase transitions for interacting particle systems on random graphs}
\date{}
\author{Benedetta Bertoli$^1$, Grigorios A. Pavliotis$^1$, Niccolò Zagli$^{1,2}$}
\theoremstyle{plain}
\newtheorem{thm}{Theorem}[section] 
\newtheorem{lem}[thm]{Lemma} 
\newtheorem{prop}[thm]{Proposition} 
\newtheorem{assumption}[thm]{Assumption}
\theoremstyle{definition} 
\newtheorem{defn}[thm]{Definition}
\theoremstyle{remark}
\newtheorem{rem}[thm]{Remark} 
\newcommand{\R}{\mathbb{R}}
\newcommand{\F}{\mathcal{F}}
\newcommand{\dd}{\,\mathrm{d}}
\newcommand{\ML}{\mathcal{M}_1^L([0,2\pi] \times [0,1])}
\begin{document}

\maketitle
{\small \noindent $^1$Department of Mathematics, Imperial College London, London SW7 2AZ, UK,
\\
benedetta.bertoli21@imperial.ac.uk,  pavl@ic.ac.uk and n.zagli18@imperial.ac.uk
\\
$^2$School of Computing and Mathematical Sciences, University of Leicester, LE17RH, Leicester, UK
}

\begin{abstract}
 In this paper,  we study weakly interacting diffusion processes on random graphs. Our main focus is on the properties of the mean-field limit and, in particular, on the nonuniqueness and bifurcation structure of stationary states. By extending classical bifurcation analysis to include multichromatic interaction potentials and random graph structures, we explicitly identify bifurcation points and relate them to the spectral properties of the graphon integral operator. In addition, we develop a self-consistency formulation of stationary states that recovers the primary critical threshold and reveals secondary bifurcations along non-uniform branches. Furthermore, we characterize the resulting McKean-Vlasov PDE as a gradient flow with respect to a suitable metric. In addition, we provide strong evidence that (minus) the interaction energy of the interacting particle system serves as a natural order parameter. In particular, beyond the transition point and for multichromatic interactions, we observe an energy cascade that is strongly linked to dynamical metastability. 
\end{abstract}

\section{Introduction}

\subsection{General introduction}

The study of stochastic interacting particle systems (SIPS) and their mean-field limit has been a topic of extensive research in recent decades due to their wide-ranging applications in physics~\cite{FrankBook, RevModPhys.77.137, Ruffo_al_2018}, biology~\cite{Painter_al_2024, Degond_al_2017}, and even the social sciences~\cite{Naldi-al-2010, Chazelle_al2017a, Pavliotis_al_2021, CAHILL2025130490, helfmann2023modelling}. One of the most interesting aspects of such systems is the emergence of collective behavior at the macroscale, due to the interaction between the particles at the microscale. Examples include the emergence of consensus in models for opinion dynamics~\cite{Pavliotis_al_2021, wehlitz2024approximating}, chemotaxis~\cite{Stevens_2000}, collective synchronization~\cite{Ruffo_al_2018}, emergence of order in active matter~\cite{chepizhko2021revisiting}, self-organised alignment dynamics~\cite{Degond2015}, mean-field games and macroeconomics~\cite{Degond_al_2014},  synchronization dynamics in biological and technological systems~\cite{SmithGottwald_2020, SmithGottwald_2021}. In recent years, it has been recognized that the emergence of collective behavior at the macroscale can be interpreted as a disorder-order phase transition~\cite{carrillo2020long}. The area of SIPS and of their mean-field limit has experienced enormous progress in recent years, and many extensions to, e.g. multi-species models~\cite{Giunta_2022} or moderately interacting diffusions~\cite{hao2024propagationchaosmoderatelyinteracting} have been made.

In many applications, it is important to consider interacting particle systems and, more generally, agent-based models, on graphs. Interacting multi-agent systems on graphs have been used in several different applications, such as biology and the social sciences. Examples include the dynamics of power grid networks~\cite{gaskin2024inferring}, opinion dynamics \cite{during2024breaking, aletti2022opinion, frasca2024opinion, nugent2025opinion}, models of biological neurons \cite{jabin2021mean, jabin2023mean}, social networks~\cite{Porter_2016}, mean-field (stochastic) games \cite{caines2021graphon, neuman2024stochastic}.  

In recent years, the rigorous analysis of interacting particle systems on graphs has seen significant development in cases where white noise is incorporated into the system. Stochastic interacting particle systems (SIPS) on random graphs have been studied in many recent works, including \cite{bayraktar2020graphon, bhamidi2019weakly, delattre2016note, bet2024weakly, luccon2020quenched, coppini2022long, coppini2022note, gkogkas2022graphop, crucianelli2024interacting}. These studies have established, under appropriate assumptions on the interaction potential and on the graph structure, several results on propagation of chaos, the law of large numbers, and the central limit theorem. A crucial element of the analysis presented in these (and many other) papers is the systematic use of the theory of graphons~\cite{Lovasz_2012, medvedev2018continuum}; see also recent developments on network dynamics on graphops~\cite{Kuehn_2020}.  
%mean-field limits for non-exchangeable particle systems

Clearly, different network topologies can have a profound impact on the collective dynamics of the SIPS. This is a topic that has been studied extensively in the past few years, in particular, for Kuramoto-type models and their variants. Kuramoto-type models on graphs have been extensively studied in the deterministic setting in a series of papers by Medvedev and collaborators (\cite{chiba2016mean}, \cite{chiba2017mean},\cite{medvedev2018continuum}, \cite{kaliuzhnyi2018mean}, \cite{chiba2018bifurcations}). In these important papers, the authors obtained several results on the long-term behaviour and bifurcations of such systems; in particular, the influence of the network topology on the synchronization onset was studied. Both first-order (overdamped) and second-order (inertial) dynamics were considered.

It is well known that, for fully coupled SIPS at equilibrium in the absence of an underlying network topology, the mean-field dynamics can exhibit several stationary states, corresponding to critical points of the system's free energy. These dynamics are characterized by phase transitions, where changes in control parameters, such as temperature and/or interaction strength, lead to shifts in the global minimizer of the free energy. A standard example is the disorder-order phase transition from the uniform to the synchronized state in the noisy Kuramoto model~\cite{Giacomin_al_2010}. 

However, for interaction potentials with several nonzero Fourier modes (so-called multichromatic potentials) the nature of the stationary states is substantially richer. There are several applications, e.g. in polymer dynamics~\cite{LuVuk_2010} and biology~\cite{Primi2009} where it is necessary to consider interaction potentials (on the torus or the sphere) with several nonzero (negative) Fourier modes. As examples, we mention the Onsager and Maiers-Saupe potentials~\cite{carrillo2020long, LuVuk_2010, vukadinovic2023phase} or the Hegselmann-Krause interaction potential~\cite{Chazelle_al2017a, Pavliotis_al_2021} used in opinion dynamics models. A recent comprehensive study of stationary solutions in McKean–Vlasov systems with more general interaction potentials, but without a graphon structure, can be found in~\cite{balasubramanian2025structure}.
% Beyond the primary transition from the uniform state, new branches of non-uniform equilibria may themselves lose stability, leading to secondary bifurcations. Understanding this hierarchy of transitions, and how it depends on the underlying network topology, is one of the central aims of the present work.

The main goal of this paper is to systematically study phase transitions for SIPS on random graphs, with particular emphasis on how the interaction potential and the network topology influence the nature of these transitions. 
In particular, we extend the bifurcation analysis presented in the aforementioned works beyond the Kuramoto model, i.e. we consider the multichromatic interaction potentials studied in~\cite{bertoli2025} and consider a variety of network topologies. In~\cite{bertoli2025} the stability of multipeak solutions was studied in detail. In particular, it was shown that, in general, such states tend to be unstable and that the dynamics converges, in the long-time limit, to a single cluster. Understanding the stability of cluster/multipeak states is a very interesting problem related to the phenomenon of dynamical metastability~\cite{Butta_al_2003,BashiriMenz2021}.  
Here, we build on these results by developing a self-consistency formulation for stationary states of the McKean-Vlasov equation on graphs. This approach recovers the primary synchronization threshold and further identifies the onset of secondary instabilities of multipeak states.
One of the questions that we address in this paper is whether multipeak solutions are unstable for a variety of random graph topologies. In addition, we introduce an "order parameter", namely the negative of the interaction energy, that keeps track of the number of clusters of the SIPS. The dynamics of clusters for interacting particle systems with local attractive interactions--leading to first order phase transitions in the mean field limit--in the absence of a graphon structure, was studied recently in~\cite{gerber2025formationclusterscoarseningweakly,leimkuhler2025clusterformationdiffusivesystems}.

%It is well known that, for fully coupled SIPS at equilibrium in the absence of an underlying network topology, the mean-field dynamics can exhibit several stationary states, corresponding to critical points of the system's free energy. These dynamics are characterized by phase transitions, where changes in control parameters, such as temperature and/or interaction strength, lead to shifts in the global minimizer of the free energy. A standard example is the disorder-order phase transition from the uniform to the synchronized state in the noisy Kuramoto model. 

Our main contributions are as follows.
\begin{itemize}
    \item We identify the gradient flow structure of the McKean-Vlasov equation with a modified Wasserstein metric. The associated free energy functional characterizes stationary states and provides insight into long-time dynamics.
    \item Using the Crandall-Rabinowitz theorem, we derive explicit formulas for critical interaction thresholds for synchronization transitions on various random graphs. Our approach, inspired by Tamura~\cite{tamura1984asymptotic}, complements linear stability analysis and highlights the role of network topology.
    \item We introduce a self-consistency formulation of stationary states that provides a closed system for the Fourier modes of the equilibrium density. In particular, we show that beyond the primary transition from the uniform state, new branches of non-uniform equilibria may themselves lose stability. We study how the underlying network topology affects the nature of these secondary bifurcations. 
    \item We perform comprehensive numerical simulations of $N$-particle systems on various graph topologies, including the Erdős-Rényi, small-world and power-law graphs. Using the interaction energy as an order parameter, we identify phase transitions and analyse the impact of dynamical metastability on the dynamics of SIPS with multichromatic interaction potentials. 
    
%includes multiple Fourier modes.
\end{itemize}

\subsection{Set-up}\label{subsec: setup}
In this section, we introduce the system of weakly interacting diffusions together with the class of random graphs on which the interactions take place.
The random graph structure is described in terms of a symmetric measurable function $W : [0,1]^2 \rightarrow [0,1]$, referred to as a graphon~\cite{kaliuzhnyi2018mean}. To construct a discrete approximation of $W$ for a system of $N$ particles we adopt the following standard procedure.
First, we partition $[0,1]$ into equal subintervals of length $\frac{1}{N}$, $I_{N,i} := [\frac{i-1}{N}, \frac{i}{N})$, for $i = 1, \dots, N$.
Then, for each pair $i,j \in [N] := \{ 1, \ldots, N \}$, we define the edge weight:
\begin{align*}
    W_{N,ij} := N^2 \int_{I_{N,i} \times I_{N,j}} W(x,y) \dd x \dd y
\end{align*}
to be the average value of $W(x,y)$ over the interval $I_{N,i} \times I_{N,j}$. Using these values, we define the piecewise constant approximation:
\begin{align*}
    W_N(x,y) = W_{N,ij} \mathds{1}_{(x,y) \in I_{N,i} \times I_{N,j}}.
\end{align*}
This converges to $W(x,y)$ almost everywhere and in $L^2([0,1]^2)$ as $N \to \infty$; see \cite[Lemma 3.3]{kaliuzhnyi2018mean}.
This gives rise to a weighted random graph $\Gamma_N = (V_N, E_N)$ by setting the vertex set $V_N := [N]$ and the edge set as:
\begin{align*}
    E(\Gamma_N) :=\{ \{i,j\} : W_{N,ij} \neq 0, i,j \in [N] \}.
\end{align*}

For $N \in \mathbb{N}$, we consider the system of stochastic differential equations (SDEs) on $\Gamma_N$ given by:
\begin{equation}\label{SDE}
    \dd X_t^i = -\frac{\theta}{N \alpha_N} \sum_{j=1}^N W_{N,ij} D'(X_t^i - X_t^j) \dd t + \sqrt{2\beta^{-1}} \dd B_t^i,
\end{equation}
where $X_t^i \in [0,2\pi], i = 1, \ldots, N$ represent the positions of the $N$ particles and $B_t^i, i = 1, \ldots, N$ are standard independent Brownian motions.
Here, $\theta>0$ represents the strength of the interaction and $\beta$ is the inverse temperature. Furthermore, the $N$-dependent scaling factor $\alpha_N$ is introduced to guarantee a non-trivial limit of equations \eqref{SDE} for $N \to \infty$ when the underlying network is sparse; see ~\cite{chiba2018bifurcations} and later discussion. For dense graphs, it can be assumed that $\alpha_N = 1$. $D:[0,2\pi] \rightarrow \R$ is the interaction potential.\footnote{In this paper, we will consider the SIPS on graphs in one dimension.} We will assume that $D$ is an even $2\pi$-periodic Lipschitz continuous function.

As $N \to \infty$, the empirical measure associated with the system \eqref{SDE} converges, under appropriate initial conditions, to a probability density $\rho = \rho(t,u,x)$ satisfying the nonlinear Fokker-Planck (McKean-Vlasov) equation:
\begin{equation}\label{FP}
    \partial_t \rho(t,u,x) = \theta \partial_u \left( \rho(t,u,x) \int_0^1 \int_0^{2\pi} W(x,y) D'(u-v) \rho(t,v,y) \dd v \dd y \right) + \beta^{-1} \partial_u^2 \rho(t,u,x),
\end{equation}

The rigorous derivation of~\eqref{FP} and propagation of chaos for systems like~\eqref{SDE} has been established in a number of recent works, under various assumptions on $W$ and $D$~\cite{coppini2022note, gkogkas2022graphop, delattre2016note, bayraktar2022stationarity, bayraktar2023graphon, luccon2020quenched}. In particular:
- \cite[Prop. 1.3]{coppini2022note} proves well-posedness assuming measurability of the law and control on second moments,
- \cite[Prop. 2.4]{luccon2020quenched} considers differentiable $D$ with sublinear growth and regular $W$,
- \cite[Thm. 3.2]{bet2024weakly} treats bounded Lipschitz potentials $D$ and establishes uniqueness.

% In this paper, we consider this PDE as a gradient flow in the space of probability measures equipped with the modified Wasserstein metric studied in \cite{bashiri2020gradient}. The free energy functional associated with the mean-field PDE is given by:
% \begin{align*}
%     \mathcal{F}(\rho) &:= \beta^{-1} \int_0^{1} \int_0^{2\pi} \rho(u,x) \log(\rho(u,x)) \dd u \dd x \\
%     &+ \frac{\theta}{2} \int_0^{1}\int_0^{2\pi}\int_0^{1}\int_0^{2\pi} W(x,y) D(u-v) \rho(u,x) \rho(v,y) \dd v \dd y \dd u \dd x.
% \end{align*}

The purpose of this paper is to study the long-term behavior of both the $N$- particle system  \eqref{SDE} and the mean-field limit \eqref{FP}, for a variety of interaction potentials $D$ and graphons $W$. In particular, we will focus on the following random graphs:
\begin{itemize}
    \item Erdős-Rényi (ER) graph. The ER graph is a dense graph constructed by setting an edge between all nodes $i$ and $j$ with constant probability $p \in [0,1]$. The adjacency matrix of the ER graph is $W_{N,ij} = 1$ if there is an edge between nodes $i$ and $j$, and $W_{N,ij}  =0$ otherwise. It follows that the graphon associated with an ER graph is the constant function $W(x,y) = p$. When $p = 1$, the ER graph corresponds to the all-to-all, complete graph. 
    % In the following, we will consider ER graphs with $p = 0.5$
    \item Small-World (SW) graph. First introduced in \cite{Watts_Strogatz1998}, the SW graph is a dense graph that interpolates between an $r$-nearest neighbours ring lattice and an ER graph. The resulting graph structure is quite regular but allows for random edges across the network. Its name derives from the property of such graphs of having short path lengths between nodes originally far located on the ring. There are numerous variants of the original algorithm described in \cite{Watts_Strogatz1998} to construct SW graphs. We here use the following method.  Consider $N$ nodes arranged on a ring, with each node having $r/2$ neighbours on both sides ($r$ being an even number). Now, select a rewiring probability $p \in [0,1]$. For each node $k$ and all its $r/2$ neighbours on the right, perform, with probability $p$, a rewiring move consisting of creating a new edge between $k$ and a randomly extracted node, provided the edge did not exist before, and destroying the old one. The graphon associated with such a Small World graph is \cite{Medvedev2014,MEDVEDEV2014_SW}
    \begin{equation}\label{e: SW}
        W(x,y) = 
        \begin{cases}
            1 -p +2ph & \text{ if } \min \{ |x-y|, 1 - |x-y| \} \leq h \\
            2ph & \text{ else.}
        \end{cases}
    \end{equation}
    Here, the continuous coupling range $h \in [0,1/2]$ can be estimated as $h = r / (2N)$.  
    
    \item Power-Law (PL) graph. In this paper, we also consider sparse random graphs that have power-law degree distributions. In particular, we consider the power-law graph corresponding to the graphon $W(x,y) = (xy)^{-\gamma}$ for $0 < \gamma < \frac{1}{2}$. For \eqref{FP} to be the mean-field limit of the $N$-particle system \eqref{SDE}, it is fundamental to rescale the interaction strength by a sequence $0 <\alpha_N \leq 1$, satisfying $\alpha_N \to 0$ and $N \alpha_N \to +\infty $. Following~\cite{chiba2018bifurcations}, we here consider $\alpha_N = N^{- \alpha}$ with $0 < \gamma < \alpha < 1$. The graphs corresponding with the power-law graphon are constructed using the procedure described in~\cite{kaliuzhnyi2017semilinear,chiba2018bifurcations}.
    \end{itemize}

For the remainder of the paper, we work under the following assumptions.
\begin{assumption}
    \begin{itemize}
         \item The graphon $W : [0,1]^2 \to [0,1]$ is a symmetric measurable function, i.e., $W(x,y) = W(y,x)$ for all $x, y \in [0,1]$.
        \item $W$ satisfies the following $L^1$ continuity condition: 
        \begin{equation}\label{e: Wregularity}
            \int_0^1 |W(x_1,y)-W(x_2,y)| \dd y \rightarrow 0 \text{ as } |x_1 - x_2| \rightarrow 0.
        \end{equation}
        \item $D : [0,2\pi] \rightarrow \R$ is a Lipschitz continuous, even function.
    \end{itemize}
\end{assumption}
All the graphons mentioned above satisfy \eqref{e: Wregularity}.  For the Small-World graphon, this follows from estimating the measure of the symmetric difference between neighbourhoods on the circle, which varies linearly with $|x_1 - x_2|$; the condition can also be verified for the Erdős–Rényi and Power-Law cases based on their explicit expressions.

\subsection{Organization of the Paper}

The remainder of the paper is organised as follows. In Section \ref{sec:GradientFlow}, we study the gradient flow structure of the Fokker-Planck equation and the associated free energy functional, its key properties and its connection to the existence of stationary states. In Section \ref{sec:BifurcationTheory}, we combine bifurcation theory, spectral analysis, and the self-consistency formulation to derive explicit expressions for the critical interaction strength governing both the primary and secondary transitions on various graph topologies. In Section \ref{sec:NumericalExperiments}, we present numerical simulations of the system of SDEs which validate these theoretical predictions, and illustrate the phase diagrams for multichromatic interaction potentials. In Section \ref{sec:Conclusions}, we summarize our findings.

\section{Free energy and gradient flow structure}\label{sec:GradientFlow}
Before analysing bifurcations, it is useful to understand the variational structure underlying the graphon McKean-Vlasov equation. In particular, the existence of a gradient-flow formulation provides a natural energy functional whose critical points correspond to stationary states.
In the case of a fully connected graph (i.e. $W \equiv 1$), it is known that the McKean-Vlasov equation:
\begin{equation}\label{e: originalFP}
    \partial_t \rho(t,u) = \theta \partial_u \left( \rho(t,u) \int_0^{2\pi} D'(u-v) \rho(t,v) \dd v \right) + \beta^{-1} \partial_u^2 \rho(t,u)
\end{equation}
has a gradient flow structure with respect to the 2-Wasserstein distance (see, for example, \cite{carrillo2020long}).
In particular, Eqn.~\eqref{e: originalFP} can be written as: 
\begin{align*}
    \partial_t \rho = \partial_u \cdot \left(\rho \partial_u \frac{\delta \mathcal{E}}{\delta \rho} \right),
\end{align*}
where $\mathcal{E}$ denotes the free energy
\begin{align}\label{e:free_en}
    \mathcal{E}(\rho, {\beta, \theta}) = \beta^{-1} \int_0^{2\pi} \rho(u) \log (\rho(u)) \dd u + \frac{{\theta}}{2} \int_0^{2\pi} \int_0^{2\pi} D(u-v) \rho(u) \rho(v) \dd u \dd v. 
\end{align}
Our goal in this section is to extend this gradient flow structure to the graphon setting. The key difference is that the graphon McKean-Vlasov PDE \eqref{FP} explicitly depends on the spatial variable $x \in [0,1]$, but involves no derivatives with respect to it. As such, the standard $W_2$ metric is no longer appropriate. To resolve this, we adapt the modified Wasserstein structure introduced in \cite{bashiri2020gradient}, treating the $x$-variable as a fixed label and defining transport only in the $u$-direction for each $x$. This leads to a modified space of probability densities, defined as follows.
Let $\ML$ denote the set of probability measures on $[0,2\pi] \times [0,1]$ whose marginal in $x$ is the Lebesgue measure. We define
\begin{align*}
  \mathcal{P}^L_2([0,2\pi] \times [0,1]) := \left\{ \mu \in \mathcal{M}_1^L([0,2\pi] \times [0,1]) : \int_{[0,2\pi] \times [0,1]} |u|^2 \dd \mu(u, x) < \infty \right\}.
\end{align*}
equipped with the localised Wasserstein metric:
\begin{align*}
    (W^L(\mu, \nu))^2 := \int_0^1 W_2(\mu^x, \nu^x)^2 \dd x,
\end{align*}
where $\mu = \mu^x \dd x$.

The associated free energy functional in this setting becomes:
\begin{align}
\label{eq: Free energy}
    \mathcal{F}(\rho) = \beta^{-1} \iint \rho(u,x) \log \rho(u,x) \dd u \dd x + \frac{\theta}{2} \iiiint W(x,y) D(u - v) \rho(u,x) \rho(v,y) \dd u \dd x \dd v \dd y.
\end{align}
The transport dynamics under $W^L$ are local in $x$, and, for each fixed $x$, the marginal $\rho^x(u)$ evolves according to a local continuity equation:
\begin{equation}\label{e:local_continuity}
    \partial_t \rho^x(u) + \partial_u (\rho^x(u) v^x(u)) = 0,
\end{equation}
where $v^x(u)$ is the velocity field minimizing the free energy in the $u$-Wasserstein geometry for fixed $x$.
In particular, 
\begin{align*}
    v^x(u) = -\partial_u \frac{\delta \mathcal{F}}{\delta \rho}(u,x).
\end{align*}
For a formal derivation of the continuity equation in a similar setting, we refer to \cite[Sec. 3]{bashiri2020gradient}.
We now compute the first variation of $\F$ with respect to $\rho$, verifying that this gradient recovers the PDE \eqref{FP}.
We recall that $\frac{\delta \mathcal{F}}{\delta \rho}$ is any measurable function that satisfies:
\begin{align*}
    \frac{\dd}{\dd \varepsilon} \mathcal{F}(\rho+\varepsilon \rho_1) \bigg\rvert_{\varepsilon =0} = \int \frac{\delta \mathcal{F}}{\delta \rho}(\rho) \dd \rho_1
\end{align*}
for all smooth perturbations $\rho_1$. We consider each term of $\F = \mathcal{S} + \mathcal{E}_{\text{int}}$.

\paragraph{Entropy term.} The entropy term is given by
\begin{align*}
    \mathcal{S}(\rho) = \beta^{-1} \iint \rho(u,x) \log \rho(u,x) \dd u \dd x.
\end{align*}
By differentiating under the integral, we obtain the standard result:
\begin{align*}
    \frac{\delta \mathcal{S}}{\delta \rho}(u,x) = \beta^{-1} (1+\log \rho(u,x)).
\end{align*}
\paragraph{Interaction energy.} The interaction energy is:
\begin{align}
\label{eq: interaction energy}
    \tilde{E} = \frac{\theta}{2} \iiiint W(x,y) D(u - v) \rho(u,x) \rho(v,y) \dd u \dd x \dd v \dd y.
\end{align}
To compute the first variation of this, we expand $\tilde{E}(\rho + \varepsilon \rho_1)$ to first order in $\varepsilon$:
\begin{align*}
    \tilde{E} (\rho + \varepsilon \rho_1) = \tilde{E}(\rho) + \varepsilon \iiiint W(x,y) D(u-v) \rho_1(u,x) \rho(v,y) \dd u \dd x \dd v \dd y + O(\varepsilon^2).
\end{align*}
This gives the variational derivative:
\begin{align*}
    \frac{\delta \tilde{E}}{\delta \rho} (u,x) = \theta \iint W(x,y)D(u-v) \rho(v,y) \dd v \dd y
\end{align*}

Putting both terms together:
\begin{align*}
    \frac{\delta \mathcal{F}}{\delta \rho}(u, x) =  \beta^{-1}(1 + \log(\rho)) + \theta \int_0^1 \int_0^{2\pi} W(x,y) D(u-v) \rho(v,y) \dd v \dd y .
\end{align*}

Substituting this into the continuity equation \eqref{e:local_continuity}, we recover the graphon Fokker-Planck equation.

\subsection{Properties of the Free Energy}
A critical advantage of the gradient-flow formulation of \eqref{FP} is that it provides us with a free energy functional $\mathcal{F}(\rho)$ whose critical points correspond to stationary states of the system. Before moving onto the bifurcation analysis in section \ref{sec:BifurcationTheory}, we briefly examine the structure of $\mathcal{F}$ and its convexity properties, as they offer a useful qualitative picture.
\begin{prop}\label{p: critical point}
    $\rho$ is a stationary solution of \eqref{FP} if and only if it is a critical point of the free energy.
\end{prop}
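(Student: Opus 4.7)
The plan is to leverage the gradient-flow identity $\partial_t \rho = \partial_u\bigl(\rho\,\partial_u \tfrac{\delta \mathcal{F}}{\delta \rho}\bigr)$ established immediately above the statement, together with the explicit expression for $\tfrac{\delta \mathcal{F}}{\delta \rho}(u,x)$ just computed. I will show that both ``stationary solution'' and ``critical point'' collapse to the same algebraic condition: $\tfrac{\delta \mathcal{F}}{\delta \rho}(u,x)$ is $u$-independent for a.e.\ $x \in [0,1]$.

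For the forward direction, assume $\partial_u\bigl(\rho\,\partial_u \tfrac{\delta \mathcal{F}}{\delta \rho}\bigr) = 0$. Integrating once in $u \in [0,2\pi]$ and using $2\pi$-periodicity of $\rho(\cdot,x)$ forces the flux $\rho(u,x)\,\partial_u\tfrac{\delta \mathcal{F}}{\delta \rho}(u,x)$ to be $u$-independent, and then by evaluating at matching endpoints to vanish identically. Strict positivity of stationary densities, which follows from the Gibbs-type fixed-point representation $\rho(u,x)=Z(x)^{-1}\exp\bigl\{-\beta\theta\iint W(x,y)D(u-v)\rho(v,y)\,\dd v\,\dd y\bigr\}$ combined with boundedness of $W$ and Lipschitz continuity of $D$ on the compact torus, allows division by $\rho$ and yields $\partial_u\tfrac{\delta \mathcal{F}}{\delta \rho}(u,x)=0$. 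Hence $\tfrac{\delta \mathcal{F}}{\delta \rho}(u,x) = C(x)$ for some measurable function $C$.

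For the converse, I first identify the admissible perturbations in the constrained space $\mathcal{P}_2^L([0,2\pi]\times[0,1])$: since $p^1_{\#}\mu$ must remain equal to $\mathrm{Leb}_{[0,1]}$, any admissible $\rho_1$ must satisfy $\int_0^{2\pi}\rho_1(u,x)\,\dd u = 0$ for a.e.\ $x$. The critical-point condition $\int_0^1\!\int_0^{2\pi}\tfrac{\delta \mathcal{F}}{\delta \rho}(u,x)\,\rho_1(u,x)\,\dd u\,\dd x = 0$ for all such $\rho_1$, tested against separable perturbations of the form $\rho_1(u,x)=\varphi(x)\bigl(\psi(u)-\tfrac{1}{2\pi}\int_0^{2\pi}\psi\bigr)$ with arbitrary $\varphi,\psi$, forces $\tfrac{\delta \mathcal{F}}{\delta \rho}(u,x)=C(x)$ for some measurable $C(x)$ (the fiberwise Lagrange multiplier). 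Differentiating in $u$ and substituting into the gradient-flow identity gives $\partial_t\rho = 0$, so $\rho$ is stationary.

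The main obstacle is the correct bookkeeping of the fiberwise mass constraint inherent to $\mathcal{P}_2^L$: critical points here are not those of the unconstrained functional but carry an $x$-dependent Lagrange multiplier $C(x)$, which is precisely consistent with the fact that \eqref{FP} involves no derivatives in $x$ and with the $W^L$-geometry treating $x$ as a passive parameter. The only genuinely analytic ingredient is strict positivity of the stationary density, needed to pass from $\rho\,\partial_u\tfrac{\delta\mathcal{F}}{\delta\rho}=0$ to $\partial_u\tfrac{\delta\mathcal{F}}{\delta\rho}=0$; it is standard in McKean--Vlasov theory but deserves explicit verification given the graphon-weighted exponent.
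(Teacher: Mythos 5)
Your proof is correct, but it takes a genuinely different route from the paper's. The paper's argument for the forward direction goes through the Gibbs fixed-point representation of Theorem~\ref{t: stationary distribution} directly: it substitutes $\beta^{-1}\log\rho + \theta\iint W D\,\rho = -\beta^{-1}\log Z$ into the first variation $\int(\rho_1-\rho)\frac{\delta\mathcal F}{\delta\rho}$ and observes that the result vanishes because $\rho$ and $\rho_1$ have equal mass; the converse is dispatched by pointing to \cite{carrillo2020long}. You instead work at the level of the gradient-flow identity $\partial_t\rho = \partial_u\bigl(\rho\,\partial_u\frac{\delta\mathcal F}{\delta\rho}\bigr)$ and show that both sides of the equivalence collapse to the single Euler--Lagrange condition $\frac{\delta\mathcal F}{\delta\rho}(u,x) = C(x)$: stationarity gives zero flux (via periodicity and strict positivity of $\rho$, the latter borrowed from the Gibbs representation, so you still tacitly invoke Theorem~\ref{t: stationary distribution}), and the critical-point condition tested against mean-free separable perturbations identifies $C(x)$ as the fiberwise Lagrange multiplier for the constraint $\int_0^{2\pi}\rho_1(u,x)\,\dd u = 0$ built into $\mathcal P_2^L$. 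Your version is more conceptual: it makes explicit \emph{which} constraint the multiplier enforces and why $C$ may depend on $x$, and it handles the converse self-containedly rather than by citation. One sentence is imprecise: after integrating once in $u$ the flux is constant, so ``evaluating at matching endpoints'' is vacuous; the actual reason the constant $c(x)$ vanishes is that $\partial_u\frac{\delta\mathcal F}{\delta\rho} = c(x)/\rho$ must have zero mean over $[0,2\pi]$ by periodicity of $\frac{\delta\mathcal F}{\delta\rho}$, and $\int_0^{2\pi}\rho^{-1}\dd u > 0$. You do supply both ingredients (periodicity, positivity), so this is a wording slip rather than a gap.
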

\begin{proof}
    The proof is an adaptation of ~\cite[Prop. 2.4]{carrillo2020long}, and it relies on the formulation of stationary states given by Theorem \ref{t: stationary distribution}. In particular, $\rho$ solves:
    \begin{equation}\label{e:stationaryrho}
        \rho = \frac{1}{Z(\rho)} \exp \left( -\beta \theta \int_0^1 \int_0^{2\pi} W(x,y) D(u-v)\rho(v,y) \dd v \dd y \right).
    \end{equation}
    Using this formulation, one can verify that for $\rho, \rho_1 \in \mathcal{P}_2^L, s \in [0,1]$, $\frac{\dd}{\dd s} \mathcal{F}(\rho_s) \bigg\rvert_{s=0} = 0$ for any $\rho_1$, where $\rho_s = (1-s)\rho + s \rho_1$.
\end{proof}
The sign of the interaction potential $D$ plays a key role in shaping the energy landscape. We introduce the notion of $H$-stability, which provides a simple criterion for whether $\F$ is convex.
\begin{defn}($H$-stable potential)
    We say the interaction potential $D:[0,2\pi] \rightarrow \R$ is $H$-stable if for every bounded signed measure $\mu$, we have $\int_0^{2\pi} \int_0^{2\pi} D(u-v) \mu(\dd u) \mu (\dd v) \geq 0$. Equivalently, $D$ is $H$-stable if its Fourier transform $\hat{D}(k) = \int_0^{2\pi} D(u) e^{iku} \dd u$ is non-negative almost everywhere for every $k \in \mathbb{Z}$.
\end{defn}
Every potential $D$ can be decomposed into a stable part $D_s$ consisting of the positive Fourier modes of $D$, and a remaining unstable part $D_u$ (\cite[Sec. 2.2]{carrillo2020long}).
\begin{prop}\label{p: F convex}
    Assume $D$ is $H$-stable. Then, the free energy functional $\mathcal{F}(\rho)$ is convex.
\end{prop}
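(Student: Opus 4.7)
The strategy is to decompose the free energy as $\mathcal{F} = \mathcal{S} + \mathcal{W}$ and establish convexity of each summand separately. For the mean-field entropy $\mathcal{S}$, convexity is classical and follows from the pointwise convexity of $s \mapsto s\log s$ together with linearity of the integral, i.e.\ from Jensen's inequality applied pointwise in $(u,x)$. Since the sum of convex functionals is convex, it therefore suffices to show convexity of $\mathcal{W}$.

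Since $\mathcal{W}$ is a quadratic functional in $\rho$, for any $\rho_0, \rho_1 \in \PL$ and $t \in [0,1]$, setting $\eta := \rho_1 - \rho_0$ and expanding the quadratic gives
\begin{equation*}
t\mathcal{W}(\rho_1) + (1-t)\mathcal{W}(\rho_0) - \mathcal{W}(t\rho_1 + (1-t)\rho_0) = \frac{\theta\, t(1-t)}{2}\, B(\eta, \eta),
\end{equation*}
where $B(\eta, \eta) := \int_0^1\int_0^{2\pi}\int_0^1\int_0^{2\pi} W(x,y) D(u-v) \eta(u,x)\eta(v,y)\, \dd u\, \dd v\, \dd x\, \dd y$. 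Convexity of $\mathcal{W}$ thus reduces to showing $B(\eta, \eta) \geq 0$ for every admissible $\eta$. Crucially, $\eta$ is real-valued and, since both $\rho_0, \rho_1$ share Lebesgue $x$-marginal, the constraint $\int_0^{2\pi} \eta(u,x)\, \dd u = 0$ holds for a.e.\ $x \in [0,1]$.

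To extract positivity from $H$-stability, I would expand $D$ in its Fourier series, $D(u-v) = \frac{1}{2\pi}\sum_{k \in \mathbb{Z}} \hat{D}(k)\, e^{-ik(u-v)}$, substitute into $B(\eta,\eta)$, and invoke Fubini to factor the bilinear form as
\begin{equation*}
B(\eta,\eta) = \frac{1}{2\pi}\sum_{k \in \mathbb{Z}} \hat{D}(k) \int_0^1\int_0^1 W(x,y)\, f_k(x)\, \overline{f_k(y)}\, \dd x\, \dd y, \qquad f_k(x) := \int_0^{2\pi} e^{-iku}\, \eta(u,x)\, \dd u.
\end{equation*}
$H$-stability then yields $\hat{D}(k) \geq 0$ for every $k$, and the constant mode $k=0$ drops out because the vanishing-marginal condition forces $f_0 \equiv 0$.

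The main obstacle is then to show that $\int_0^1\int_0^1 W(x,y)\, f_k(x)\, \overline{f_k(y)}\, \dd x\, \dd y \geq 0$ for each $k \neq 0$ — equivalently, that the graphon integral operator $T_W g(x) := \int_0^1 W(x,y) g(y)\, \dd y$ defines a positive semi-definite quadratic form on $L^2([0,1];\mathbb{C})$. Non-negativity of $W$ alone is not enough for this, so some structural information on $W$ is needed. For the graphons considered in this paper the verification is direct: for the Erd\H{o}s-R\'enyi graphon $W \equiv p$ one has $\int\int W f_k \overline{f_k} = p\bigl|\int_0^1 f_k(x)\,\dd x\bigr|^2 \geq 0$, and for the power-law graphon $W(x,y) = (xy)^{-\gamma}$ the kernel is rank one of the form $x^{-\gamma} \cdot y^{-\gamma}$ and is manifestly positive semi-definite. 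Combining this verification with the Fourier factorisation gives $B(\eta,\eta) \geq 0$, hence convexity of $\mathcal{W}$ and of $\mathcal{F}$.
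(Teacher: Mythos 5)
Your route is genuinely different from the paper's. You expand $D$ in Fourier modes and factor the bilinear form as $B(\eta,\eta)=\frac{1}{2\pi}\sum_k\hat D(k)\langle f_k,Lf_k\rangle$, with the zero mode killed by the marginal constraint; the paper instead treats $\W$ as an abstract symmetric quadratic form, asserts that the tensor-product kernel $K(u,x,v,y)=W(x,y)D(u-v)$ is positive semi-definite, and closes with the Cauchy--Schwarz inequality $\W(\rho_1,\rho_2)\le\tfrac12\bigl(\W(\rho_1,\rho_1)+\W(\rho_2,\rho_2)\bigr)$. Both arguments, once unwound, hinge on exactly the same assertion: that $K=D\otimes W$ is positive semi-definite. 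Your Fourier decomposition is the more transparent way to see what that requires, and you correctly identify the crux: a tensor product of kernels is positive semi-definite when \emph{both} factors are positive semi-definite, so what one actually needs is that the graphon operator $L$ have only non-negative eigenvalues, not merely that $W$ is pointwise non-negative. The paper's proof glosses over precisely this distinction, justifying positive semi-definiteness of $K$ by saying only that ``$D$ is of positive type and $W$ is non-negative''; pointwise non-negativity of a kernel does not imply non-negativity of its integral operator, so that step is unjustified as written.

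Your verification handles the Erd\H{o}s--R\'enyi and power-law graphons by exhibiting them as rank-one kernels $w(x)w(y)$, which indeed makes $L\succeq0$. But you leave the small-world graphon untouched, and that is where the gap actually bites. The SW kernel is a circulant $W(x,y)=2ph+(1-p)\mathds{1}\{\min(|x-y|,1-|x-y|)\le h\}$ whose nonzero Fourier coefficients are $(1-p)\sin(2\pi nh)/(\pi n)$; these change sign in $n$, so $L$ has negative eigenvalues for generic $h$. Choosing $f$ along such a negative eigenmode, $D(u)=\cos u$ (which is $H$-stable), and $\eta(u,x)=\varepsilon\,\mathrm{Re}\bigl(f(x)e^{iu}\bigr)$ produces an admissible perturbation of the uniform density with $B(\eta,\eta)<0$; consequently $\W$, and therefore $\F$ once $\theta$ is large relative to $\beta^{-1}$, fails to be convex. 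So the issue you flagged is real and is not merely a gap in your own write-up: Proposition~\ref{p: F convex} as stated needs the additional hypothesis that the graphon operator is positive semi-definite, which neither the paper's assumptions nor its proof actually supply.
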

\begin{proof}
    The entropy term $S(\rho)$ of the free energy is convex because the function $\rho \mapsto \rho \log\rho$ is convex for $\rho >0$. 
    To show convexity for $\mathcal{E}_{\text{int}}(\rho)$, we observe that it is a quadratic form with kernel $K(u,x,v,y) = \theta W(x,y)D(u-v)$. If $D$ is of positive type, and $W \geq 0$, then $K$ is positive semi-definite. This implies that $\mathcal{E}_{\text{int}}$ is convex. Therefore, $\mathcal{F} = \mathcal{S}+\mathcal{E}_{\text{int}}$ is convex.
\end{proof}

\section{Bifurcation theory for the McKean-Vlasov PDE on graphs}\label{sec:BifurcationTheory}
The McKean-Vlasov equation on graphs can exhibit qualitative changes in behaviour as the interaction strength $\theta$ varies. In this section, we analyse the emergence of non-uniform stationary states as $\theta$ crosses a critical threshold. Our approach combines bifurcation theory, spectral analysis, and a self-consistency formulation of stationary states. The discussion is organised as follows:
\begin{enumerate}
    \item We describe the structure of stationary solutions and establish their existence and uniqueness for small $\theta$, following the method in \cite{tamura1984asymptotic}.
    \item We characterise primary bifurcations from the uniform state via spectral analysis and compute the corresponding critical value $\theta_c $.
    \item We reformulate the stationary problem as a system of self-consistency equations for the Fourier modes, which both recovers the primary threshold and allows the study of secondary bifurcations on non-uniform branches, giving rise to a second critical value $\theta_c^{(2)}$.
\end{enumerate}
We summarise the main result here: 
\begin{thm}
    Assume $D(u) = -\sum_{k \geq 1} a_k \cos(ku)$ with $a_k >0$. Let $L$ be the graphon integral operator $(Lv)(x) = \int_0^1 W(x,y) v(y) \dd y$, with eigenvalues ${\lambda}_{l \in \mathbb{N}}$ and largest eigenvalue $\lambda_1 >0$. As the interaction strength $\theta$ increases, the system undergoes a bifurcation from the uniform to a non-uniform stationary state.
    \begin{enumerate}
        \item \textbf{Primary bifurcation:} For each Fourier mode $m \geq 1$ and graphon eigenvalue $\lambda_l$,
        \begin{align*}
            \theta_{m,l} = \frac{2}{\beta \lambda_l a_m}.
        \end{align*}
        The first (primary) threshold is
        \begin{align*}
            \theta_c  = \min_{m,l} \theta_{m,l} .
        \end{align*}
        In particular, if $\lambda_1$ is simple and $a_{m_*} = \max_k a_k$, then:
        \begin{align*}
            \theta_c  = \frac{2}{\beta a_{m_*} \lambda_1}.
        \end{align*}
        \item \textbf{Secondary bifurcation:}
        Consider the bichromatic potential $D(u) = -a_1\cos(u)-a_2\cos(2u)$, $a_2 > a_1$. Let $R_{2}(\cdot;\theta)$ denote the order parameter of the even branch. Then, near the bifurcation, $R_{2}(x)$ is proportional to the leading eigenfunction $\phi_1$ of $L$. Define:
        \begin{align*}
            h_{2}(x;\theta) = \beta \theta a_{2} R_{2}(x;\theta), \qquad g(x;\theta) = \frac{1}{2} \left(1+\frac{I_1(h_2(x;\theta))}{I_0(h_2(x;\theta))} \right)
        \end{align*}
        Set $A(\theta) := L \circ M_{g}$, where $M_g$ is multiplication by $g$. Then, the odd mode $m=1$ becomes unstable at the unique $\theta = \theta_c^{(2)}$ solving:
        \begin{align*}
            \beta \theta_c^{(2)} a_1 \lambda_{\max}(A (\theta_c^{(2)})) = 1.
        \end{align*}
        Moreover,
        \begin{align*}
            \frac{1}{\beta a_1 \lambda_1} \leq \theta_c^{(2)} \leq \frac{2}{\beta a_1 \lambda_1}.
        \end{align*}
        In the special case where the leading eigenfunction of $L$ is piecewise constant (e.g. Erdős–Rényi, Small-World), one has $A(\theta) = g(\theta) L$, and hence:
        \begin{align*}
            \theta_c^{(2)} = \frac{1}{\beta a_1 g(\theta_c^{(2)})\lambda_1}.
        \end{align*}
    \end{enumerate}
\end{thm}
\begin{rem}
    We expect that the second part of the theorem holds for the more general class of interaction potentials considered in the first part. However, the formulas for $g$ and $\theta_c^{(2)}$ become more complicated and we refrain from showing the details here.   
\end{rem}
\subsection{Stationary solutions as fixed points}
To analyse bifurcations of stationary states, it is convenient to rewrite the McKean-Vlasov equation \eqref{FP} as a non-linear fixed-point problem for the stationary density.
Let $\rho(u,x) \in L^1([0,2\pi] \times [0,1])$ denote a probability density. A stationary solution of the McKean-Vlasov equation \eqref{FP} satisfies $\rho = f(\rho, \theta)$, where the map $f:L^1([0,2\pi] \times [0,1]) \rightarrow L^1([0,2\pi] \times [0,1]) $ is defined by:
\begin{equation}\label{e:f_map}
  f(\rho,\theta) := \frac{1}{Z(\rho)} \exp \left( -\beta \theta \int_0^1 \int_0^{2\pi} W(x,y) D(u-v)\rho(v,y) \dd v \dd y \right),
\end{equation}
and $Z$ is the normalisation constant.
In particular, the constant solution $\rho \equiv \frac{1}{2\pi}$ always solves the equation.

% For a probability density $\mu$, we define the operator $G_{\mu}$ by:
% \begin{align*}
%     G_{\mu}\varphi(u,x) = \theta \partial_u \left(\varphi(u,x) \partial_u \left( \int_0^1 \int_0^{2\pi} W(x,y)D(u-v) \mu(v,y) \dd v \dd y \right) \right) + \beta^{-1} \partial_u^2 \varphi(u,x),
% \end{align*}
% for $\varphi \in C^2([0,2\pi] \times [0,1])$.

% \begin{lem}\label{wux}
%     For any probability density $\mu$, there exists a unique solution of $G_{\mu}w =0$ among probability densities, and furthermore $w$ is given by:
%     \begin{align*}
%         w(u,x) = \frac{\exp \left(- \beta \theta \int_0^1 \int_0^{2\pi} W(x,y)D(u-v)\mu(v,y) \dd v \dd y \right)}{\int_0^1 \int_0^{2\pi} \exp \left( - \beta \theta \int_0^1 \int_0^{2\pi} W(x,y)D(u-v)\mu(v,y) \dd v \dd y \right) \dd u \dd x}.
%     \end{align*}
% \end{lem}

% A consequence of this lemma is the following characterisation of stationary distributions:

\begin{thm}\label{t: stationary distribution}(Characterisation of stationary states)
    \begin{enumerate}
        \item For any $\theta \in \R$, there exists a stationary solution to \eqref{FP}.
        \item For $|\theta|$ sufficiently small, \eqref{FP} admits a unique stationary distribution.
    \end{enumerate}
\end{thm}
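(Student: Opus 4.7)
The plan is to dispatch the three assertions in sequence, with Lemma~\ref{wux} serving as the central tool.

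For (1), I rewrite the stationarity condition for \eqref{FP}. Using $D'(u-v)=\partial_u D(u-v)$ and pulling the $u$-derivative outside the $(v,y)$-integral, the condition $0 = \theta \partial_u(\rho \int W D' \rho) + \beta^{-1}\partial_u^2\rho$ becomes exactly $G_\rho \rho = 0$ in the notation preceding Lemma~\ref{wux}. Applying that lemma with $\mu=\rho$, the unique probability density satisfying this equation is $f(\rho)$, which gives $\rho = f(\rho)$. Conversely, if $\rho = f(\rho)$, then by Lemma~\ref{wux} we have $G_\rho \rho = 0$, i.e.\ $\rho$ is stationary.

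For (2), I apply Schauder's fixed-point theorem to $f$ on the closed convex set $\mathcal{P}$ of probability densities on $[0,2\pi]\times[0,1]$, viewed as a subset of $L^1$. By construction $f(\mathcal{P})\subset \mathcal{P}$. Boundedness of $W$ and $D$ makes the exponential in \eqref{e:f_map} uniformly bounded above and below by positive constants depending only on $\beta,|\theta|,\|D\|_\infty,\|W\|_\infty$, so $f(\rho)$ is uniformly bounded. Lipschitz continuity of $D$ yields equicontinuity of $\{f(\rho)\}_\rho$ in $u$ uniformly in $\rho$, while the $L^1$ regularity condition \eqref{e: Wregularity} gives equicontinuity in $x$ via a bound of the form $|\log f(\rho)(u,x_1) - \log f(\rho)(u,x_2)| \leq \beta|\theta|\|D\|_\infty \int|W(x_1,y)-W(x_2,y)|\,\dd y$. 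Ascoli-Arzel\`a then gives relative compactness of $f(\mathcal{P})$ in $C([0,2\pi]\times[0,1])$, and hence in $L^1$. Continuity of $f$ on $L^1$ follows from dominated convergence applied to the integrand $\int W(x,y)D(u-v)\rho(v,y)\,\dd v\,\dd y$, which is linear and bounded in $\rho$.

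For (3), I show that $f$ is a contraction on $(\mathcal{P},\|\cdot\|_{L^1})$ for $|\theta|$ sufficiently small. Writing $\phi_\rho(u,x) := \int W(x,y)D(u-v)\rho(v,y)\,\dd v \,\dd y$, one has $\|\phi_{\rho_1}-\phi_{\rho_2}\|_\infty \leq \|D\|_\infty \|\rho_1-\rho_2\|_{L^1}$ and $\|\phi_\rho\|_\infty \leq \|D\|_\infty$. Hence the exponential is Lipschitz on the relevant range, and the partition functions $Z(\rho_i)$ are bounded above and below by strictly positive constants. A standard quotient estimate then yields $\|f(\rho_1)-f(\rho_2)\|_{L^1} \leq K|\theta|\,\|\rho_1-\rho_2\|_{L^1}$ with $K$ depending only on $\beta,\|D\|_\infty,\|W\|_\infty$. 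For $|\theta|<1/K$, Banach's fixed-point theorem yields a unique fixed point of $f$, which by part (1) is the unique stationary distribution.

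The main obstacle lies in (2): equicontinuity in the $x$-variable is not automatic, since the PDE carries no derivatives in $x$, and securing it is precisely where Assumption \eqref{e: Wregularity} is indispensable. Once this is in place, the remainder is standard; in particular, the estimates for (3) reduce to careful bookkeeping of the uniform lower bound on $Z(\rho)$, which ultimately determines how small $|\theta|$ must be.
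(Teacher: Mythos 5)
Your proposal is correct and follows essentially the same strategy as the paper: part (1) via Lemma~\ref{wux} applied with $\mu=\rho$ after identifying the stationary Fokker--Planck equation with $G_\rho\rho=0$, part (2) via Schauder's fixed-point theorem with Arzel\`a--Ascoli compactness resting on the Lipschitz continuity of $D$ and the $L^1$-continuity condition~\eqref{e: Wregularity}, and part (3) via a Banach contraction estimate in $L^1$ for small $|\theta|$. The only cosmetic differences are that the paper obtains a slightly more explicit contraction constant $C_\theta = 2\beta\theta\|D\|_\infty e^{2\beta\theta\|D\|_\infty}$ in part (3) and bundles its Arzel\`a--Ascoli argument through the exponent rather than $f(\rho)$ directly; neither changes the substance.
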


\begin{proof}
    We sketch the argument for each part.
    (1) The existence of stationary states follows from Schauder's fixed point theorem. Define the closed, convex subset $B:=\{ \rho \in L^1([0,2\pi] \times [0,1]): \rho \geq 0, \|\rho \|_1 =1 \}$. The map $f(\rho,\theta)$ is continuous from $B \rightarrow B$. Due to the regularity assumptions on $W$ and $D$, its image is relatively compact. Hence, we can apply the Arzelà–Ascoli theorem and dominated convergence to an appropriate set of functions to prove compactness. Therefore, $f$ admits a fixed point in $B$.

    (2) For small $|\theta|$, the map $f$ becomes a contraction. Using a standard inequality for exponentials,
    \begin{align*}
        |e^a - e^b| \leq e^{\max(a,b)} |a-b|,
    \end{align*}
    we can estimate the difference $\|f(\rho_1) - f(\rho_2)\|_1$ in terms of $\| \rho_2 - \rho_1\|_1$. This gives a Lipschitz bound $C_{\theta} < 1$ when $\theta$ is small, establishing uniqueness by Banach's fixed-point theorem.
\end{proof}
\subsection{Bifurcation from the uniform state}\label{subsec: bifurcation}
We rewrite the stationary equation as $g(\rho,\theta) := \rho - f(\rho,\theta) =0$, and analyse bifurcations from the uniform state $\rho = \frac{1}{2\pi}$.
We define the operator:
\begin{align*}
    T\rho(u,x) =- \frac{\beta}{2\pi} \int_0^1 \int_0^{2\pi} W(x,y) D(u-v) \rho(v,y) \dd v \dd y.
\end{align*}
We restrict to studying this operator on the subspace of even probability densities, i.e. functions $\rho(u,x)$ satisfying $\rho(-u,x) = \rho(u,x)$. This is crucial as, when restricted to even functions, the operator $T$ admits simple eigenvalues under mild assumptions on $W$ and $D$. Without this restriction, symmetry-induced multiplicities would prevent us from applying standard bifurcation theory, such as the Crandall-Rabinowitz theorem.
\begin{thm}\label{bifurcationthm}(Bifurcation criterion)
    Suppose $\theta_0^{-1}$ is a simple eigenvalue of $T$. Then, $(\rho, \theta) = \left( \frac{1}{2\pi}, \theta_0 \right)$ is a bifurcation point of $g=0$.
\end{thm}

The proof is based on the following characterisation of bifurcation points provided by the Crandall-Rabinowitz theorem(\cite{crandall1971bifurcation}, \cite[Lem. 4.2]{tamura1984asymptotic}).
\begin{lem}[Crandall–Rabinowitz theorem]\label{CR}
    Let $g(\rho,\theta)$ be a smooth map between Banach spaces, and suppose $g \left( \frac{1}{2\pi}, \theta_0 \right) = 0$. Assume that:
    \begin{enumerate}
        \item $D_{\theta} g \left( \frac{1}{2\pi}, \theta_0 \right) =0$.
        \item The linearised operator $D_{\rho} g \left( \frac{1}{2\pi}, \theta_0 \right)$ has a one-dimensional kernel, and its range is closed and of codimension one.
        \item There exists $\phi \in$ ker$D_{\rho} g$ with $D^2_{\rho \theta} g \left(\frac{1}{2\pi}, \theta_0 \right)[\phi] \notin \text{Im} D_{\rho} g$.
    \end{enumerate}
    Then, $\left( \frac{1}{2\pi}, \theta_0 \right)$ is a bifurcation point of the equation $g(\rho,\theta) =0$.
\end{lem}

\subsection{Spectral characterization and critical threshold}\label{subsec: threshold}
To compute the critical interaction strength $\theta_c$, we study the spectral properties of the operator $T \rho(u,x)$. Our goal is to determine when the operator $T$ has a simple eigenvalue $\theta^{-1}$, as this signals a bifurcation from the uniform state.
We proceed  by diagonalising the graphon integral operator:
\begin{equation}\label{Loperator}
    L[V](x) := \int_0^1 W(x,y) V(y) \dd y.
\end{equation}
$L$ is compact and self-adjoint, and so admits a complete orthonormal basis of eigenfunctions $\{V_l(x)\}_{l \in \mathbb{N}}$ with corresponding eigenvalues $\{ \lambda_l \} \subset \R$.
Since the interaction kernel $D(u)$ is assumed to be even, its Fourier modes are purely cosine terms. Therefore, we consider separable functions of the form:
\begin{align*}
    \rho_{m,l}(u,x) = V_l(x) \cos(mu).
\end{align*}
These functions are even in $u$, and they serve as eigenfunctions for the operator $T$. Plugging $\rho_{m,l}$ into the definition of $T$, one can verify that the corresponding eigenvalues are:
\begin{align*}
    \lambda_{m,l} = -\frac{\beta \lambda_l}{2\pi} \int_0^{2\pi} D(v) \cos(mv) \dd v
\end{align*}
A bifurcation occurs when $\lambda_{m,l} = \theta^{-1}$, provided this eigenvalue is simple. In practice, simplicity depends both on the graphon $W$ and the interaction kernel $D$; for instance, if the integral graphon operator $L$ has degenerate eigenvalues, multiplicity may arise. To account for this, we define the index set:
\begin{align*}
    M := \{ (m,l) \in \mathbb{N}^2 : \lambda_{m,l} \text{ is a simple eigenvalue of } T \}.
\end{align*}
The bifurcation analysis developed in Section \ref{subsec: bifurcation} applies to any $(m,l) \in M$, and the first bifurcation occurs at the pair minimizing the critical value of $\theta$.

We conclude:
\begin{prop}\label{p: multichromatic_critical}
    Under the assumptions above, the system undergoes a bifurcation from the uniform state at the critical interaction strength:
    \begin{equation}\label{e: multichromatic critical}
        \theta_c = \min_{(m,l) \in M} \left[\frac{2 \pi}{\beta {\lambda}_l} \left( \int_0^{2\pi} D(v)\cos(mv) \dd v \right)^{-1} \right].
    \end{equation}
    where $M \subset \mathbb{N}^2$ is the set of index pairs for which $\lambda_{m,l}$ is a simple eigenvalue of the operator $T$.
\end{prop}
For $D(u) = - \sum_{k=1}^n a_k\cos(ku)$, this reduces to $\theta_c = \min_{m,l} \frac{2}{\beta a_m \lambda_l}$.

 \subsection{Stability analysis and free energy expansion}
We can confirm the critical threshold $\theta_c$ for the onset of non-uniform states with two other independent approaches: linear stability and variational analysis. These alternative approaches can offer a more direct route to the critical threshold in other settings where a full bifurcation analysis is not possible or where spectral information is easier to deduce.
\paragraph{Linear stability}
As shown in \cite{gkogkas2022graphop}, linearising the dynamics around the uniform state $\rho \equiv \frac{1}{2\pi}$ leads to a Fourier-mode decomposition in which each mode evolves independently. Stability depends on the eigenvalues of an operator of the form:
\begin{align*}
    F^m w = \frac{1}{2} (-im \theta \hat{D}_{-m} L(w) - m^2 \beta^{-1}w),
\end{align*}
where $\hat{D}_m$ is the $m$-th Fourier coefficient of $D$.
For interaction potentials of the form $D(u) = -\sum_{k=1}^n a_k \cos(ku)$, instability occurs in mode $j \leq n$ if:
\begin{align*}
    \theta > \frac{2}{\beta \lambda_l a_m}
\end{align*}
for an eigenvalue $\lambda_l$ of the graphon operator $L$. This matches the critical value identified in Section \ref{subsec: threshold}.

\paragraph{Free energy second variation}
We can confirm this threshold from a variational perspective by computing the second variation of the free energy of $\F$ around $\rho = \frac{1}{2\pi}$.
To examine stability, we consider small perturbations $\delta \rho(u,x)$ of $\frac{1}{2\pi}$ and compute the second variation:
\begin{align*}
    \delta^2 \F = 2\pi{\beta^{-1}} \int (\delta \rho(u,x))^2 \dd u \dd x+ \theta \int W(x,y) D(u-v) \delta \rho(u,x) \delta \rho(v,y) \dd u \dd x \dd v \dd y.
\end{align*}
To enforce the constraint of mass conservation, we write $\delta \rho = \partial_u q$, where $q(u,x)$ is a mean-zero perturbation potential. Integrating by parts, we obtain:
\begin{align*}
    \delta^2 \F =  \int q(u,x) K(u,v,x,y) q(v,y) \dd u \dd v \dd x \dd y,
\end{align*}
where the kernel is given by:
\begin{align*}
    K(u,v,x,y) = -\theta W(x,y) D''(u-v) - 2\pi \beta^{-1} \partial_u^2 \delta (u-v) \delta (x-y) 
\end{align*}
We therefore consider the eigenvalue problem:
\begin{align*}
    \int_0^1 \int_0^{2\pi} K(u,v,x,y) q(v,y) \dd v \dd y = \lambda q(u,x).
\end{align*}
We work with $D(u) = -\sum_{m=1}^n a_m \cos(mu)$ and let $W$ be a graphon whose integral operator has real eigenvalues $\lambda_l$ with eigenfunctions $V_l$:
\begin{align*}
    \int_0^1 W(x,y) V_l(y) \dd y = \lambda_l V_l(x)
\end{align*}
As the operator $K$ is translation invariant in $u$, we expand perturbations into Fourier modes and restrict out attention to perturbations of the form:
\begin{align*}
    q_{m,l} = V_l(x) \cos(mu) \qquad \text{ and } \qquad q_{m,l} = V_l(x) \sin(mu)
\end{align*}
For these functions, the eigenvalue equation for an eigenvalue $\xi_{m,l}$ becomes:
\begin{align*}
    \left[ 2 \pi \beta^{-1} m^2 - \theta \pi m^2 a_m \lambda_l \right] q_{m,l} = \xi_{m,l} q_{m,l}.
\end{align*}
Therefore:
\begin{align*}
    \xi_{m,l} = \pi m^2 \left( 2\beta^{-1} - \theta a_m \lambda_l \right), \quad m \geq 1, l \in \mathbb{N} 
\end{align*}
The uniform state loses stability when the smallest of these eigenvalues crosses zero, that is at:
\begin{align*}
    \theta_c = \min_{m,l}  \frac{2}{\beta \lambda_l a_m}.
\end{align*}

\subsection{Self-consistency formulation and secondary bifurcation thresholds}\label{sec:SelfConsistency}
This subsection complements the spectral and variational analyses above by deriving stationary states from a self-consistency perspective. Starting from the stationary Fokker–Planck equation, we obtain a Gibbs-type representation of equilibria and a closed system of self-consistency equations for their Fourier modes. This framework recovers the primary bifurcation thresholds obtained earlier and further allows us to study secondary transitions that arise on non-uniform branches.

After a brief introduction, we will focus for simplicity on the bichromatic interaction potential $D(u) = -\cos(u) -2\cos(2u)$, for which the first instability is in mode $m=2$ and creates a two-peak steady state. Using the self-consistency equations, we characterise this state through a fixed-point equation, and prove that it is stable immediately above the onset, and show that it undergoes a secondary loss of stability to a one-peak state when $\theta$ is increased further.

\paragraph{Self-consistency equations}
The stationary McKean-Vlasov equation \eqref{FP} admits a Gibbs-type representation, leading to a closed system of self-consistency equations for the Fourier coefficients of the stationary density (see \cite{carrillo2020long} for a full derivation). This means that stationary solutions $\rho$ are of the form:
\begin{equation}\label{eq: Gibbs_form}
    \rho(u,x)=\frac{1}{Z(x)}\exp\left(-\beta\theta \Phi(u,x)\right),
    \qquad
    Z(x)=\int_0^{2\pi}\exp \left(-\beta\theta \Phi(u,x) \right)\dd u,
\end{equation}
where $\Phi(u,x) := \int_0^1 \int_0^{2\pi} W(x,y) D(u-v) \rho(v,y) \dd v \dd y$.
For the multichromatic interaction potential, a direct computation gives:
\begin{align*}
    \rho(u,x)=\frac{1}{Z(x)}\exp \left(\beta\theta\sum_{k=1}^n a_k R_k(x)\cos(ku)\right).
\end{align*}
where the coefficients $R_k(x)$ satisfy the self-consistency equations:
\begin{align*}
    R_k(x)=\int_0^1 \int_0^{2\pi} W(x,y) \cos(kv) \rho(v,y) \dd v \dd y,
    \qquad k=1,\dots,n,
\end{align*}
Equivalently, considering the integral operator $(Lf)(x) = \int_0^1 W(x,y)f(y) \dd y$, we can write:
\begin{align*}
    R_k = L[m_k], \qquad m_k(x) = \int_0^{2\pi} \cos(ku) \rho(u,x) \dd u,
\end{align*}
for $k \geq 1$.  
We set
\begin{align*}
    J_k := \beta \theta a_k, \qquad h_k(x) = J_k R_k(x),
\end{align*}
so that:
\begin{align*}
    \rho(u,x) \propto \exp \left( \sum_{k \geq 1} h_k(x) \cos(ku) \right).
\end{align*}
\paragraph{Linearisation about the uniform state}
Let $\rho_u = \frac{1}{2\pi}$ denote the uniform density, corresponding to $R_k \equiv 0$ for all $k$. For simplicity we work with the bichromatic potential $D(u) = -\cos(u)-2\cos(2u)$, for which the dynamics are governed by two order parameters $(R_1, R_2)$. To analyse stability and bifurcations, define the map:
\begin{align*}
    F:(R_1, R_2) \mapsto (L[m_1(h_1,h_2)], L[m_2(h_1,h_2)]). 
\end{align*}
Fixed points of $F$ correspond exactly to stationary states. Linearising $F$ around $(R_1,R_2) = (0,0)$: 
\begin{align*}
    F(R_1, R_2) = F(0,0) + DF(0,0)[(R_1,R_2)] + O(\|(R_1,R_2)\|^2).
\end{align*}
Since $F(0,0) = 0$, non-trivial solutions appear when $DF(0,0)$ has eigenvalue one.
% We have that:
% \begin{align*}
%     \frac{\partial m_j}{\partial h_l} = \frac{\partial}{\partial h_l} \left( \frac{\partial}{\partial h_k} \log Z(x) \right) = \text{Cov}_{\rho}(\cos(ju),\cos(lu))
% \end{align*}
% where $\text{Cov}_{\rho}$ denotes the covariance under the probability measure $\rho$. 
At $h=0$, the equilibrium measure is uniform, and by orthogonality of cosine modes: 
\begin{align*}
    \frac{\partial m_j}{\partial h_l} \Bigg\rvert_{h=0} = 
    \begin{cases}
        \frac{1}{2} & \text{ if } j = l \\
        0 & \text{ if } j \neq l.
    \end{cases}
\end{align*}
As $\frac{\partial h_k}{\partial R_k} = J_k \text{Id}$, the Jacobian is:
% \begin{align*}
%     DF(0,0) [\delta R_1, \delta R_2] = \left(L \left[\frac{1}{2} J_1 \delta R_1 \right], L \left[\frac{1}{2} J_2 \delta R_2 \right], \right)
% \end{align*}
% that is,
\begin{align*}
    DF(0,0) = 
     \begin{pmatrix}
         \frac{J_1}{2} L & 0  \\
         0 & \frac{J_2}{2} L \\
    \end{pmatrix}
\end{align*}    
A bifurcation occurs when one of the blocks satisfies $\frac{J_k}{2} \lambda_1 = 1$, i.e.
\begin{align*}
    J_k^{(c)}  = \frac{2}{\lambda_1}, \qquad \theta_k^c = \frac{2}{\beta a_k \lambda_1}.
\end{align*}
The first transition corresponds to the Fourier mode with the largest coefficient $a_k$. For $D(u) = -\cos(u) - 2\cos(2u)$, this is $k=2$, producing a two-peaked stationary density which is even in $u$. Hence, near the first bifurcation, $R_1 = 0$ and $R_2 \neq 0$.
\paragraph{The even branch: $R_1 \equiv 0$}
When $R_1 = 0$, the density simplifies to 
\begin{align*}
    \rho(u,x) = \frac{1}{Z(x)} \exp(J_2 R_2(x) \cos(2u)),
\end{align*}
which is $\pi$-periodic in $u$. This implies $m_1(x) = 0$, which means $R_1 = 0$ satisfies the self-consistency equation.
The remaining equation for $R_2$ reads:
\begin{align*}
    R_2 = L \left( \frac{I_1(J_2 R_2)}{I_0(J_2 R_2)} \right),
\end{align*}
where $I_n(z) = \frac{1}{\pi} \int_0^{\pi} e^{z \cos(\theta)} \cos(n\theta) \dd \theta$ are modified Bessel functions of the first kind for integer $n \geq 0$. Using the power series expansion for $I_n$, for small $b = J_2 R_2$, we have that:
\begin{align*}
    \frac{I_1(b)}{I_0(b)} = \frac{b}{2} - \frac{b^3}{16} + \ldots,
\end{align*}
giving the expansion:
\begin{align*}
    R_2 \approx \frac{J_2}{2} L[R_2] - \frac{J_2^3}{16} L[R_2^3] + \ldots
\end{align*}
Therefore, to leading order, $R_2$ behaves like an eigenfunction of $L$ with eigenvalue $\frac{2}{J_2}$; instability occurs at $J_2^{(c)} = \frac{2}{\lambda_1}$. Near onset we write $R_2(x) \approx C_2 \phi_1(x)$, where $\phi_1$ is the principal eigenfunction of $L$. Projecting the equation onto $\phi_1$ and using self-adjointness of $L$ gives:
\begin{align*}
    \left( \frac{J_2 \lambda_1}{2} -1 \right) C_2 - K C_2^3 = 0, \qquad K := \frac{J_2^3 \lambda_1}{16} \langle \phi_1, \phi_1^3 \rangle > 0
\end{align*}
% Projecting both sides onto $\phi_1$:
% \begin{align*}
%     C_2 = \frac{J_2}{2}\lambda_1 C_2 - \frac{J_2^3}{16} C_2^3 \langle \phi_1, L[\phi_1^3] \rangle = \frac{J_2}{2}\lambda_1 C_2 - \frac{J_2^3}{16} C_2^3 \langle \phi_1, \phi_1^3 \rangle
% \end{align*}
% where we used the fact that $L$ is self-adjoint. 
Hence:
\begin{itemize}
    \item If $\frac{J_2 \lambda_1}{2} < 1$, then the only solution is $C_2 = 0$.
    \item If $\frac{J_2 \lambda_1}{2} > 1$, then we have two additional solutions:
    \begin{align*}
        C_2 =  \pm \sqrt{\frac{J_2 \lambda_1 /2 -1}{K}}
    \end{align*}
\end{itemize}
Therefore, at the critical value of $J_2$, the uniform state $(R_1, R_2) = (0,0)$ undergoes a pitchfork bifurcation giving rise to two symmetric branches
\begin{align*}
    (0, \pm R_2(x)), \qquad R_2(x) \approx \pm C_2 \phi_1(x).
\end{align*}
\paragraph{Secondary instability: $R_1 \neq 0$}
We now investigate when the first harmonic mode $R_1$ becomes non-zero along the already bifurcated branch $R_2 \neq 0$. That is, we linearise the $R_1$-equation around $R_1=0$, while keeping $R_2(x)$ fixed from the previous step.
Expanding $m_1$ to first order in $h_1$,
\begin{align*}
    m_1(x) = \frac{\partial m_1}{\partial h_1} \Bigg\rvert_{h_1 = 0, h_2(x)} h_1(x) + O(h_1^3).
\end{align*}
The derivative $\frac{\partial m_1}{\partial h_1}$ can be computed using Bessel function identities, from which we obtain:
\begin{align*}
    g(x) := \frac{\partial m_1}{\partial h_1} \bigg \rvert_{h_1 = 0, h_2(x)} = \frac{1}{2} \left(1 + \frac{I_1(h_2(x))}{I_0(h_2(x))} \right).
\end{align*}
Substituting this linear approximation into the self-consistency equation for $R_1$,
\begin{align*}
    R_1(x) = \int_0^1 W(x,y) m_1(y) \dd y,
\end{align*}
gives
\begin{align*}
    R_1(x) \approx J_1 \int_0^1 W(x,y) g(y) R_1(y) \dd y = J_1 L[gR_1].
\end{align*}
Define the operator $A = L \circ M_g$, where $(M_g f)(y) = g(y)f(y)$ is multiplication by the function $g$. Then, the linearised equation reads:
\begin{align*}
    AR_1 = \frac{1}{J_1} R_1.
\end{align*}
A non-trivial solution $R_1 \neq 0$ exists when $\frac{1}{J_1}$ is an eigenvalue of $A$. i.e.:
\begin{align*}
    J_1 \lambda_{\max(A)} = 1 \iff \beta \theta \lambda_{\max} (A) = 1.
\end{align*}
\begin{rem}
     The operator $A$ is not self-adjoint unless $g$ is a constant, but is similar to the symmetric operator $\tilde{A} := M_{\sqrt{g}} L M_{\sqrt{g}},$ which has kernel $\tilde{W}(x,y) = \sqrt{g(x)} W(x,y) \sqrt{g(y)}$. Hence, $A$ and $\tilde{A}$ share the same spectrum and have a well-defined largest real eigenvalue.
\end{rem}
This linearisation determines the onset of the secondary instability of the even stationary states. In particular, we can now assess the stability of the two symmetric solutions $(0, \pm R_2)$ that bifurcate from the uniform state. The sign of $R_2$ only enters through the coefficient $g(x)$. The ratio $\Gamma(z) = \frac{I_1(z)}{I_0(z)}$ is odd, continuous and strictly increasing with $\Gamma(z) \in (-1,1)$, and the range of $g$ depends on the branch.
\begin{itemize}
    \item On the positive branch $(0, R_2)$, we have that $0 < \Gamma(h_2) < 1$, so $\frac{1}{2} < g(x) < 1$, and therefore:
    \begin{align*}
        \frac{1}{\beta \lambda_1} < \theta_c^{(2)} < \frac{2}{\beta\lambda_1}.
    \end{align*}
    \item On the negative branch $(0,-R_2)$, we have that $-1 < \Gamma(h_2) < 0$, so $0 < g(x) < \frac{1}{2}$, and therefore:
    \begin{align*}
        \theta_c^{(2)} > \frac{2}{\beta \lambda_1}.
    \end{align*}
\end{itemize}
The inequalities above follow by a Rayleigh-quotient inequality: since $L$ is self-adjoint with principal eigenvalue $\lambda_1$ and eigenvector $\phi_1$, and $g \in [\underline{g}, \overline{g}]$, we have $\underline{g} \lambda_1 \leq \lambda_{\max}(L M_g) \leq \overline{g} \lambda_1$.
The critical parameter $\theta_c^{(2)}$ is defined implicitly as the solution of: 
\begin{align*}
    \Phi(\theta) := \beta \theta  \lambda_{\max}(L M_{g(h_2(\theta))}) = 1.
\end{align*}
Since $\Gamma(z) = \frac{I_1(z)}{I_0(z)}$ and $h_2(\theta)$ depend continuously on $\theta$, the function $\Phi(\theta)$ is continuous on each branch. On the positive branch $(0, +R_2)$, $\lambda_{\max}(A(\theta))$ is non-decreasing and $\Phi(\theta)$ grows monotonically from $0$ to $+\infty$. Hence, there exists a unique secondary threshold $\theta_c^{(2)}$ such that $\Phi(\theta_c^{(2)}) = 1$.
On the negative branch $(0,-R_2), h_2(\theta) <0$ and $g(h_2(\theta)) \in \left( 0, \frac{1}{2} \right)$ decreases as $\theta$ increases. If $h_2(\theta)$ is bounded below, then $g$ admits a positive lower bound and $\Phi(\theta) \rightarrow \infty$, guaranteeing existence and uniqueness of $\theta_c^{(2)}$. If instead $h_2(\theta) \rightarrow -\infty$, then $g(h_2) \rightarrow 0$, and $\Phi(\theta)$ may remain below $1$ for all $\theta$, in which case the branch $(0,-R_2)$ remains linearly stable.

\subsection{Examples}
We now present some explicit formulas for the bifurcation points of the system for the graphons presented in Section \ref{subsec: setup}. Throughout these examples, we again choose the interaction potential $D(u) = -\cos(u)-2\cos(2u)$ for illustrative purposes, but the calculations apply to any interaction potential $D$ such that the relevant eigenvalues of the associated operator are simple. As established earlier, the thresholds for this potential are given by:
\begin{align*}
    \theta_c  = \frac{1}{\beta \lambda_1}, \qquad \theta_c^{(2)} = \frac{1}{\beta \lambda_{\max}(A)},
\end{align*}
where $A = L \circ M_g$, with multiplication operator $(M_g f)(x) = g(x) f(x)$, $g(x) = \frac{1}{2} \left( 1 + \frac{I_1(h_2(x))}{I_0(h_2(x))} \right)$, $h_2(x) = 2 \beta \theta R_2(x)$. The examples below follow directly by substituting the corresponding eigenvalues.
\paragraph{Erdős-Rényi graph} $W(x,y) \equiv p, p \in [0,1]$. Then the integral operator $L$ has a single eigenvalue $\lambda_1 = p, \phi_1(x) \equiv 1$. Using the formula \eqref{e: multichromatic critical}, the first critical value of $\theta$ is:
\begin{align*}
  \theta_c  = \frac{1}{\beta p}. 
\end{align*}
On the even branch $R_1 \equiv 0$, both $R_2$ and $g$ are constants, and $A = gL$ with $\lambda_{\max}(A) = gp$. Therefore, the secondary threshold is:
\begin{align}
\label{eq: secondary threshold ER}
    \theta_c^{(2)} = \frac{1}{\beta g(\theta_c^{(2)})p}, \qquad g(\theta) = \frac{1}{2} \left(1 + \frac{I_1(2 \beta \theta R_2)}{I_0(2 \beta \theta R_2) } \right) 
\end{align}
\paragraph{Power-Law graph} $W(x,y) = (xy)^{-\gamma}$, $0 < \gamma < \frac{1}{2}$.
The operator $L$ is rank one, and thus $\lambda_1 = \frac{1}{1-2\gamma}$, $\phi_1(x) = x^{-\gamma}$. Hence all solutions of the self-consistency equations take the form $R_k(x) = C_k x^{-\gamma}$ with scalar amplitudes $C_k \in \R$.
The primary threshold follows directly from the general formula:
\begin{align*}
  \theta_c = \frac{(1-2\gamma)}{\beta}
\end{align*}
On the even branch, $R_2(x) = C_2 x^{-\gamma}$, where $C_2$ is the unique non-negative solution of:
\begin{align*}
    C_2 = \int_0^1 y^{-\gamma} \frac{I_1(2\beta \theta C_2 y^{-\gamma})}{I_0(2\beta \theta C_2 y^{-\gamma})} \dd y.
\end{align*}
Since $I_1/I_0$ is smooth, strictly increasing, and satisfies $0 \leq \frac{I_1(z)}{I_0(z)} <1$, the right-hand side defines a smooth, strictly increasing function $F(C_2)$ with $F(0)=0$ and $0 < F(C_2) <(1-\gamma)^{-1}$. Hence a non-trivial solution $C_2(\theta) >0$ exists above $\theta_c $. Uniqueness follows from the fact that $\frac{F(C)}{C}$ is strictly decreasing, which is a consequence of recurrence relations and standard inequalities for Bessel functions.
Once the even branch exists, the general secondary threshold condition from Section \ref{sec:SelfConsistency} applies directly. Since $L$ is rank one, the top eigenvalue of $A = L \circ M_g$ is:
\begin{align*}
    \lambda_{\max}(A) = \int_0^{1} y^{-2\gamma} g(y) \dd y.
\end{align*}
Hence the secondary threshold is implicitly given by:
\begin{align*}
    \theta_c^{(2)} = \frac{1}{\beta \int_0^1 y^{-2\gamma} g(2\beta \theta C_2(\theta_c^{(2)})y^{-\gamma}) \dd y }
\end{align*}

\paragraph{Small-World graph} \eqref{e: SW}
A full analysis of the eigenvalues of the corresponding integral operator can be found in \cite{gao2019spectral}, and depends on the Fourier expansion of $W$. The relevant eigenvalue is $\lambda_1=2h$, (\cite{gkogkas2022graphop}, \cite{chiba2016mean}) which arises from a constant eigenfunction, so it has multiplicity 1.
The critical interaction strength is:
\begin{align*}
    \theta_c  = \frac{1}{2\beta h}
\end{align*}
As the leading eigenfunction of $L$ is constant, $R_2 \equiv C_2$, where $C_2$ solves:
\begin{align*}
    C_2 = 2h \frac{I_1(2\beta \theta C_2)}{I_0(2\beta \theta C_2)}.
\end{align*}
As before, this has a unique solution above the phase transition.
On this even branch, the first Fourier mode $R_1$ obeys the general linearised condition from Section \ref{sec:SelfConsistency}. As $g$ is a constant, the operator $A$ acts as scalar multiplication by $2hg$; hence the condition simplifies to:
\begin{align*}
    \theta_c^{(2)} = \frac{1}{2\beta h g \left( \theta_c^{(2)} \right)}, \qquad g(\theta) = \frac{1}{2} \left(1+\frac{I_1(2\beta \theta C_2(\theta))}{I_0(2\beta \theta C_2(\theta))} \right).
\end{align*}
% \paragraph{Multichromatic interaction potential}. Consider a general graphon $W(x,y)$, and take the interaction potential $D(u) = \sum_{k=1}^n a_k \cos(ku), n \in \mathbb{N}, a_k < 0$. Then the cosine orthogonality relations imply:
% \begin{align*}
%     \int_0^{2\pi} D(v)\cos(mv) \dd v = \delta_{k,m} a_m \pi. 
% \end{align*}
% Therefore:
% \begin{equation}\label{e: multichromatic critical}
%     \theta_c = \min_{m,l\in \mathbb{N}} \left\{ -\frac{2}{\beta s_l a_m} \right\}
% \end{equation}
% In particular, we obtain, for $m \leq n$:
% \begin{itemize}
%     \item For the Erdős-Rényi graph, $\theta_c = \min_{m \in \mathbb{N}} \left\{ -\frac{2}{\beta p a_m} \right\}$;
%     \item for the Power-Law graph, $\theta_c = \min_{m \in \mathbb{N}} \left\{ -\frac{2(1-2\gamma)}{\beta a_m} \right\}$;
%     \item for the Small-World graph $\theta_c = \min_{m \in \mathbb{N}}\left\{ -\frac{1}{\beta h a_m} \right\}$.
% \end{itemize}

\section{Numerical experiments}\label{sec:NumericalExperiments}
In this section, we study the long-time behaviour and the critical dynamics of the $N$-particle system \eqref{SDE}. In particular, we consider multichromatic potentials of the form:
\begin{equation}\label{e: multi potential}
    D(u) = -\sum_{k=1}^n a_k \cos(ku), 
\end{equation}
with $a_k > 0$. Such interaction potentials are regularly used to study synchronisation effects in multi-agent systems. In this context, the invariant uniform solution $\rho_u = \frac{1}{2\pi}$, corresponding to a disordered state, loses its stability in favour of peaked, ordered solutions for values of the interaction strength $\theta$ bigger than a critical threshold $\theta_c$. For this potential $D$, as proved in \eqref{e: multichromatic critical}, the critical interaction strength is given by:
\begin{align}
\label{eq: critical interaction strength}
\theta_c = \min_{m,l\in \mathbb{N}} \left\{ \frac{2}{\beta \lambda_l a_m} \right\},
\end{align}
where $\lambda_l, l \in \mathbb{N}$ are the eigenvalues of the integral graphon operator $L(V)(x)$ defined in \eqref{Loperator}.
When $n= 1$, one recovers the Kuramoto model for phase oscillators. In this case, the critical onset of synchronisation is studied by introducing the order parameter $r(t) = \frac{1}{N} \left| \sum_{j=1}^N \exp(i x_j(t)) \right| \in [0,1]$, which measures the degree of synchronisation of the $N$-particle system. In particular, $r=0$ corresponds to the disordered, uniform state $\rho_u$ and $r=1$ to full synchronisation. Intermediate values of the order parameter $r$ instead indicate the presence of a one-peaked density of oscillators, corresponding to the unique stable solution of the mean-field equation \eqref{FP}.
\\
More interesting dynamical regimes have been observed for the dynamics \eqref{SDE}, in the absence of an underlying graph structure \cite{Primi2009,Geigant2012,bertoli2025}, when more harmonics are introduced in the interaction potential. In particular, depending on the number of harmonics, the presence of long-lived multipeak densities of oscillators has been observed. Equation \eqref{eq: critical interaction strength} shows that, in these settings, the critical interaction strength is determined solely by the biggest amplitude $|a_k|$, regardless of the corresponding wavenumber $k$. Our previous analysis shows that multipeak solutions undergo secondary bifurcation characterised by the spectral properties of integral operator $L \circ M_g$, where $L$ is the graphon integral operator and $M_g$ is a suitable multiplication operator, see details in Section \ref{sec:SelfConsistency}.
\\
{  The coexistence of multiple stationary solutions of the mean-field graphon dynamics results in a dynamical evolution of the $N$-particle system with strong metastable regimes. Our numerical results  indicate that, away from the first bifurcation point, the system resides in multipeak states for long times before eventually converging towards the asymptotic state corresponding to the lowest possible wavenumber.} 
\paragraph{Order Parameters}
Identifying suitable order parameters for the investigation of critical phenomena of multi-agent systems is a fundamental issue. Order parameters are suitably designed projections of the $N$-particle system into a much lower-dimensional macroscopic subspace, which however maintains the key features of the dynamics. Most often, one is interested in reaction coordinates, special order parameters that not only provide information on the static properties of the critical dynamics, e.g. phase diagrams, but also capture dynamical features ~\cite{Rogal2021,ZagliLucariniPavliotis,Zagli_2024}. Ideally, the identification of reaction coordinates would be agnostic to the details of the dynamical evolution and obtained with data-driven techniques \cite{ZagliPavliotisLucarini2023,Evangelou2024}.   
For multichromatic interaction potentials, one usually introduces a generalisation of the Kuramoto order parameter, i.e. the set of order parameters 
\begin{equation*}
    r_k(t) = \frac{1}{N} \sum_{j=1}^N \exp\left(ikx_j\left(t\right)\right)
\end{equation*}
with $k=1,\dots,n$. It is unclear a priori which order parameter $r_k$ is best suited to investigate the critical dynamics and the dynamical metastability features of the $N$-particle systems. In this paper, we propose to use as reaction coordinate the interaction energy $\mathcal{E}_{\text{int}}$ (up to a factor of $\theta$), defined in equation \eqref{eq: interaction energy}. For the $N$-particle system with $D$ as in \eqref{e: multi potential}, the interaction energy is $\mathcal{E}_{{\text{int}}_N} = \frac{1}{N^2}\sum_{ij}^N W_{N,ij} D(X_t^i - X_t^j)$, which is simply the mean-field interaction energy $\mathcal{E}_{\text{int}}$ evaluated for the empirical measure associated to \eqref{SDE}. In the case of an all-to-all graph, associated with the constant graphon $W(x,y) =1$, the interaction energy for the multichromatic potential turns out to be \cite[Ch.5]{FrankBook}
\begin{align}
    U(t) = - \frac{1}{2} \sum_{k=1}^n |a_k| r_k(t)^2,
\end{align}
where the $r_k$ are the Kuramoto order parameters defined above. In the following, we show that $U(t)$ can be used to {  not only} pinpoint the onset of the synchronisation transition{ , but also the secondary bifurcation points}. This is not entirely surprising, because an increasing function of an order parameter remains an order parameter. However, the interaction energy reflects the contributions of all harmonics, providing a full picture of the system's behaviour. As a result, we show that $U(t)$ also captures the transitions between metastable states, which can be interpreted as a cascade towards different energy levels, see Panel \protect \subref{fig: Metastability} of Figure \ref{fig: Bi-harmonic}. We note that the remarkable features of the interaction energy as a reaction coordinate have been already highlighted for a system of interacting agents with short-range Gaussian attractive interaction potential \cite{MartzelAslangul2001}. Moreover, in the context of opinion formation models, the order parameter introduced in \cite{Chazelle_al2017a} on the basis of network-theory considerations can be interpreted as the interaction energy of the system.
\paragraph{Details on the numerical analysis} We simulate the $N$-particle system dynamics \eqref{SDE} with an Euler-Maruyama scheme with timestep $\Delta t = 0.01$. To construct the phase diagram for the energy $U(t)$ for a given graph type, say Erdős-Renyi, we perform $n_{graph}$ independent realisations of the random graph. For each random graph, we simulate $n_{noise}$ independent paths of the Wiener process in \eqref{SDE}. The initial condition for the system is always chosen to be the disordered state, i.e. $X_0^i \sim \text{Uniform}([0,2\pi])$ $\forall i = 1,\dots,N$. The energy $U(t)$ is observed for a time interval $[0,T]$. Due to the strong metastability features originating from multichromatic potentials, $T$ has to be set to a very high value when many harmonics are considered (see discussion below). The phase diagram is then constructed by averaging the asymptotic value of the energy over all simulations, namely
\begin{equation*}
    U = \left\langle \frac{1}{T - t_{tr}}\int_{t_{tr}}^T U(t)\mathrm{d}t \right\rangle,
\end{equation*}
where $\langle \cdot \rangle$ represents the average over all realisations and $t_{tr}$ is chosen to be safely within the asymptotic state. To quantify fluctuations around the mean value $U$, we also consider the following quantities
\begin{equation*}
    U_{min} = \left\langle \min_{t \in [t_{tr},T]} U(t) \right\rangle, \quad U_{max} = \left\langle \max_{t \in [t_{tr},T]} U(t) \right\rangle.
\end{equation*}
For all the systems investigated below, we set $N=1000$, $\sigma = \sqrt{2\beta^{-1}} = 0.1$ and use the interaction strength $\theta$ as the control parameter. Moreover, $n_{graph} = 5$ and $n_{noise}=3$. Regarding the graphs, we consider Erdős-Renyi (ER) graphs associated with a probability $p=0.5$, Small-World (SW) graphs constructed from a ring with $r=20$ and a rewiring probability $p=0.4$, and finally Power-Law (PL) graphs with characteristic exponents  $\gamma = 0.3$ and $\alpha = 0.4$.

\subsection{Kuramoto model}
\begin{figure}[ht]
\centering
\begin{subfigure}[b]{0.515\textwidth}
    \centering
    \includegraphics[width=\textwidth]{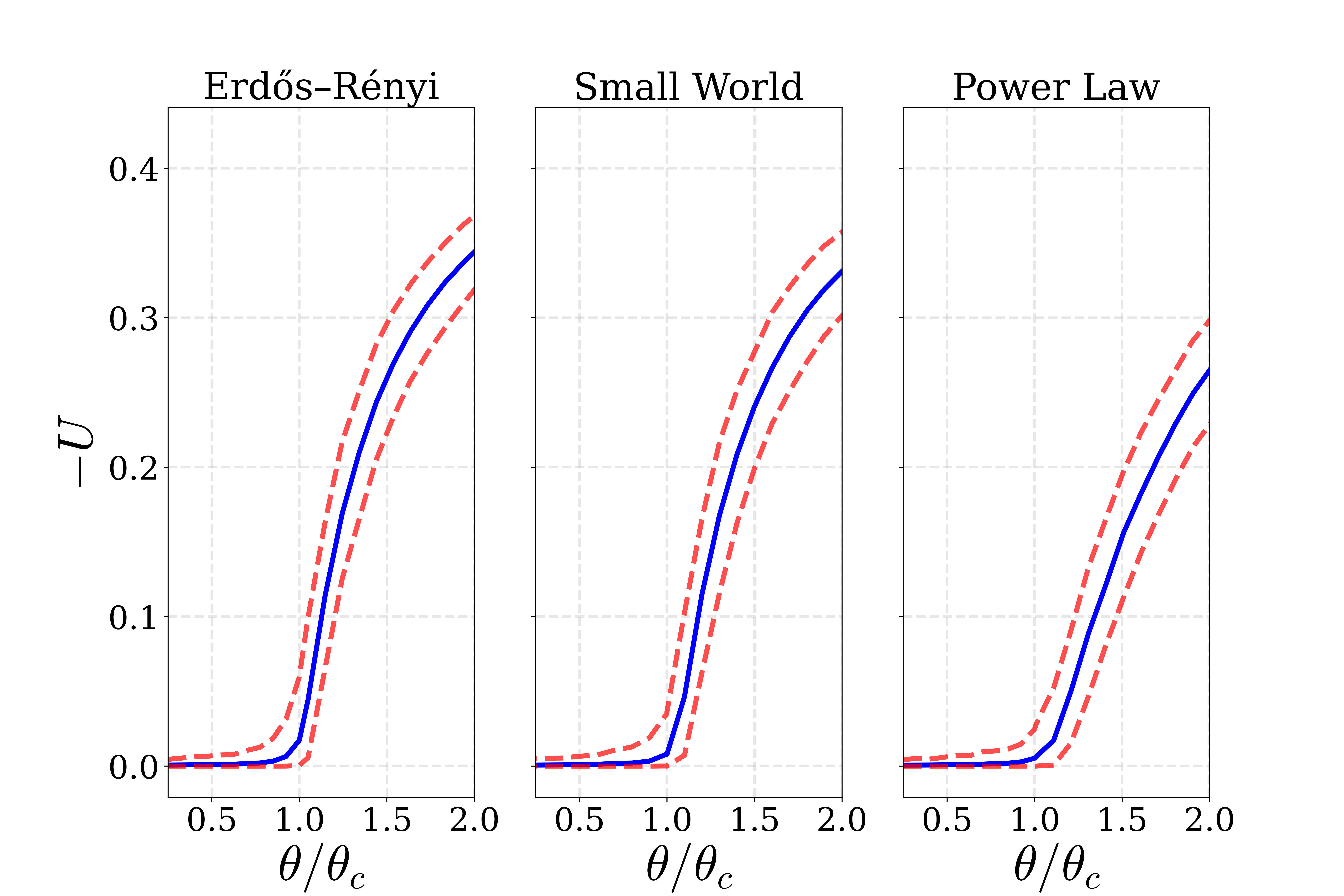}
    \caption{}
    \label{fig: Phase diag Kuramoto}
\end{subfigure}
\hfill
\begin{subfigure}[b]{0.465\textwidth}
    \centering
    \includegraphics[width=\textwidth]{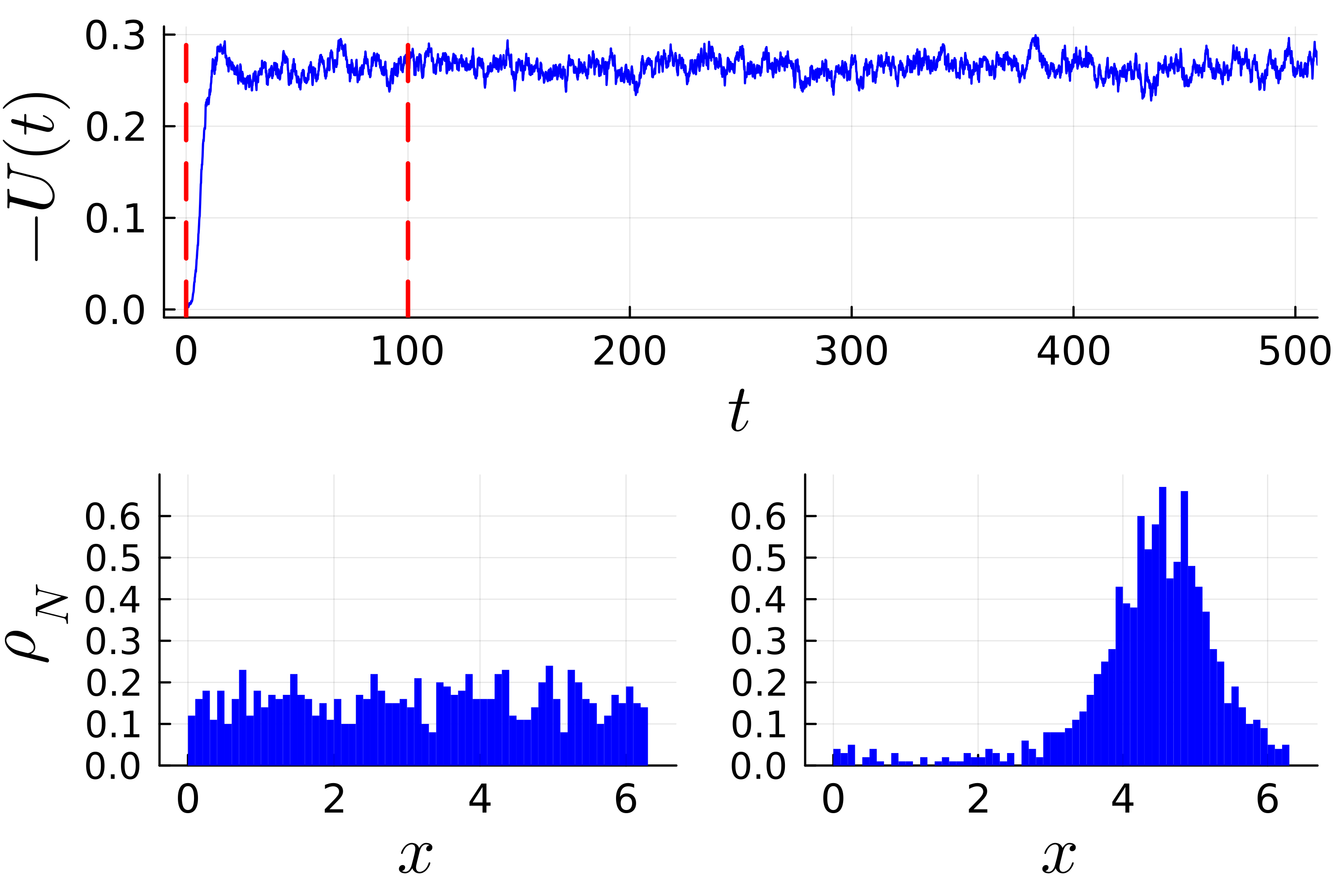}
    \caption{}
    \label{fig: No-Metastability Kuramoto}
    \end{subfigure}
\caption{(a): Phase Diagrams for the Kuramoto model. (b): (top panel) Time evolution of a typical trajectory for $U(t)$ after the phase transition, for a PL graph for $\theta / \theta_c \approx 2$. (bottom panel) Empirical measure $\rho_N$ of the system at selected times $t=0,100$, represented as red vertical dashed lines in the top panel. }
\label{fig: Kuramoto}
\end{figure}
The Kuramoto model corresponds to the single harmonic potential $D(u) = -\cos(u)$. In this case, the disordered state $\rho_u = \frac{1}{2\pi}$ is no longer stable when $\theta / \theta_c  > 1$ and an ordered, one-peak state originates from it. The phase diagrams of the interaction energy $U(t) = - |r_1(t)|^2$ of the system for different graph topologies are shown in panel \protect\subref{fig: Phase diag Kuramoto} of Figure \ref{fig: Kuramoto}. 
The phase diagram has been evaluated for $T = 1000$ and $t_{tr} = 800$ which we found to be appropriate for all values of the interaction strength considered. The Kuramoto model has been extensively studied in the literature and our results agree with \cite{gkogkas2022graphop} for the ER and SW graphs, and with \cite{chiba2018bifurcations} (in the absence of diffusion) for the PL graph. 
\\
The Kuramoto model does not exhibit any metastable features. In panel \protect\subref{fig: No-Metastability Kuramoto} of Figure \ref{fig: Kuramoto} (top panel) we show the typical evolution of the energy after the phase transition, together with the empirical measure (bottom panels) $\rho_N$ of the system at selected times (represented as red, vertical, dashed lines). The system, uniformly distributed on the torus at time $t = 0$, reaches very quickly ($t \approx 50$) an ordered, peaked state characterised by a non-vanishing energy. 
\subsection{Bichromatic potential}
\begin{figure}[ht]
\centering
\begin{subfigure}[b]{0.515\textwidth}
    \centering
    \includegraphics[width=\textwidth]{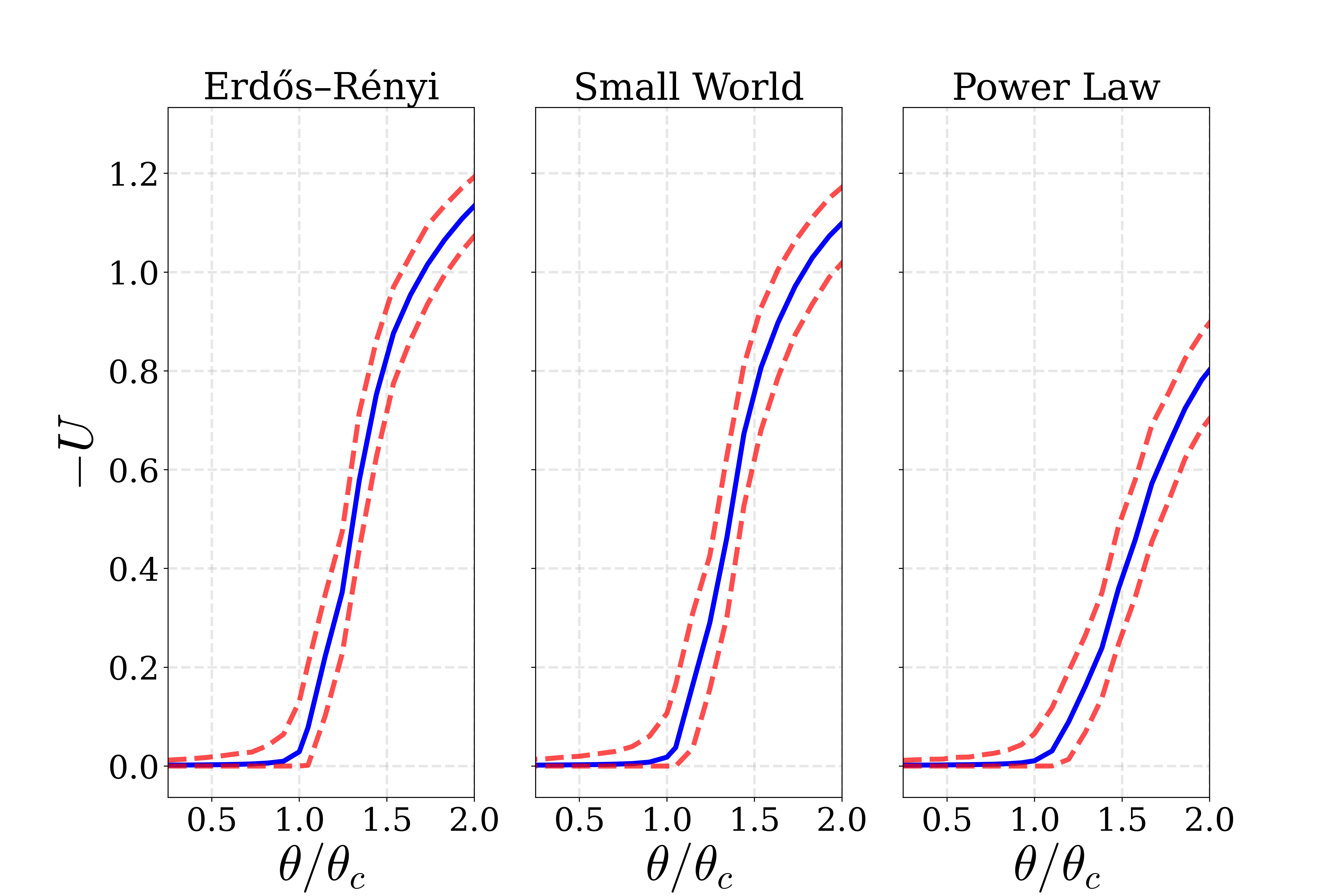}
    \caption{}
    \label{fig: Phase diag Bi-harmonic}
\end{subfigure}
\hfill
\begin{subfigure}[b]{0.475\textwidth}
    \centering
    \includegraphics[width=\textwidth]{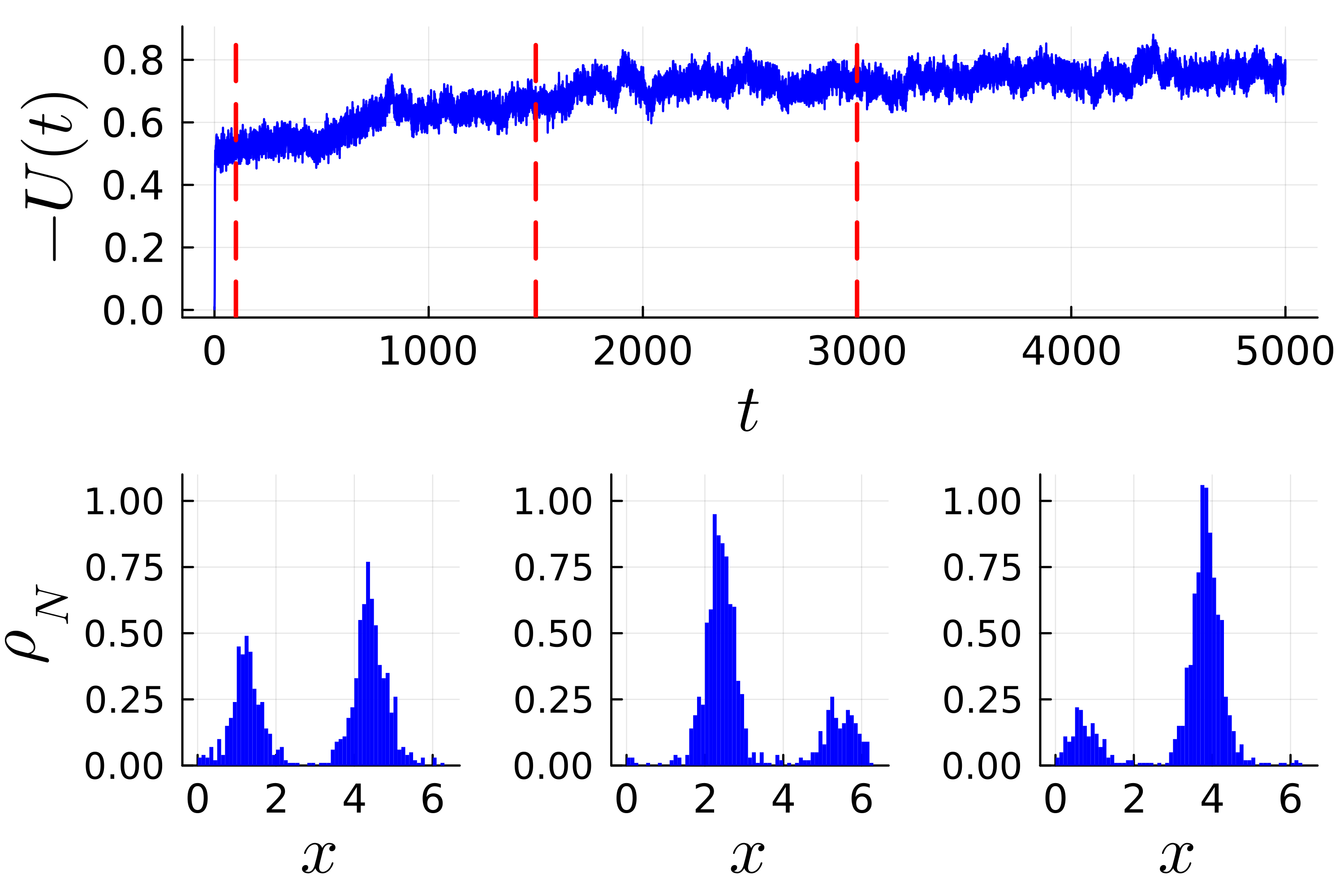}
    \caption{}
    \label{fig: Metastability}
    \end{subfigure}
\caption{(a): Phase Diagrams for the bi-harmonic interaction potential. (b): (top panel) Time evolution of a typical trajectory for $U(t)$ after the phase transition, for a PL graph for $\theta / \theta_c \approx 1.96$. (bottom panel) Empirical measure $\rho_N$ of the system at selected times $t=100,1500,3000$. Graphical conventions as in Figure \ref{fig: Kuramoto}.}
\label{fig: Bi-harmonic}
\end{figure}
Here, we consider a bichromatic potential $D(u) = - \cos(u) - 2 \cos(2u)$. The introduction of the new harmonic not only changes the critical value of the interaction strength $\theta_c$ but also considerably impacts the overall dynamics of the system. The phase diagrams for the energy $U$, corresponding to $T = 5000$ and $t_{tr} = 4500$, are provided in panel \protect \subref{fig: Phase diag Bi-harmonic} of Figure \ref{fig: Bi-harmonic}. The numerical results corroborate the theoretical prediction for the critical interaction strength \eqref{eq: critical interaction strength}. 
{  As explained in details in section \ref{sec:SelfConsistency}, the uniform solution loses stability at the critical interaction strength $\theta_c$, giving rise to a double peaked solution characterised by \textit{microscopic order parameters} $(R_1(x),R_2(x)) = (0,\pm R^*_2(x))$, solution of a suitable set of self-consistency equations. The branch of the double peaked solution characterised by a positive value of $R_2^*(x)$ undergoes a secondary bifurcation, from which a single peak solution originates. We have here investigated the second bifurcation point for the bi-chromatic potential on the Erdős-Rényi graph for which the second critical interaction strength can be numerically estimated as a solution of the self consistency equation \eqref{eq: secondary threshold ER}. Figure \ref{fig: ER secondary bifurcartion} shows the phase diagram over a much finer grid of interaction strength values past the first bifurcation point. It is clear that there is a change in dynamical regime around the secondary critical threshold $\theta_c^{(2)}$ represented as a vertical dashed line.}
Furthermore, the bichromatic potential presents strong metastability features, with the typical timescale needed to reach the stationary state being more than one order of magnitude bigger than for the Kuramoto model.
Panel \protect \subref{fig: Metastability} of Figure \ref{fig: Bi-harmonic} shows the typical evolution of the energy for settings similar to what is shown in Figure \ref{fig: Kuramoto}. Firstly, the system quickly reaches a two-peak state approximately at $t=100$, characterised by an energy $-U \approx 0.5$. Such a state is long-lived but appears to be metastable: we observe a transition to a lower energy level at $t \approx 1000$. Right after the transition, the profile of the empirical measure indicates that most of the oscillators have transitioned towards one of the two peaks. Following the transition, the system exhibits a slower dynamics where particles keep leaking from the small peak to the other peak, which becomes narrower. A similar evolution, with peaks exchanging mass, has been observed in an aggregation model featuring metastable states \cite{Evers2016}. 
\begin{figure}[ht]
    \centering
    \includegraphics[width=0.5\linewidth]{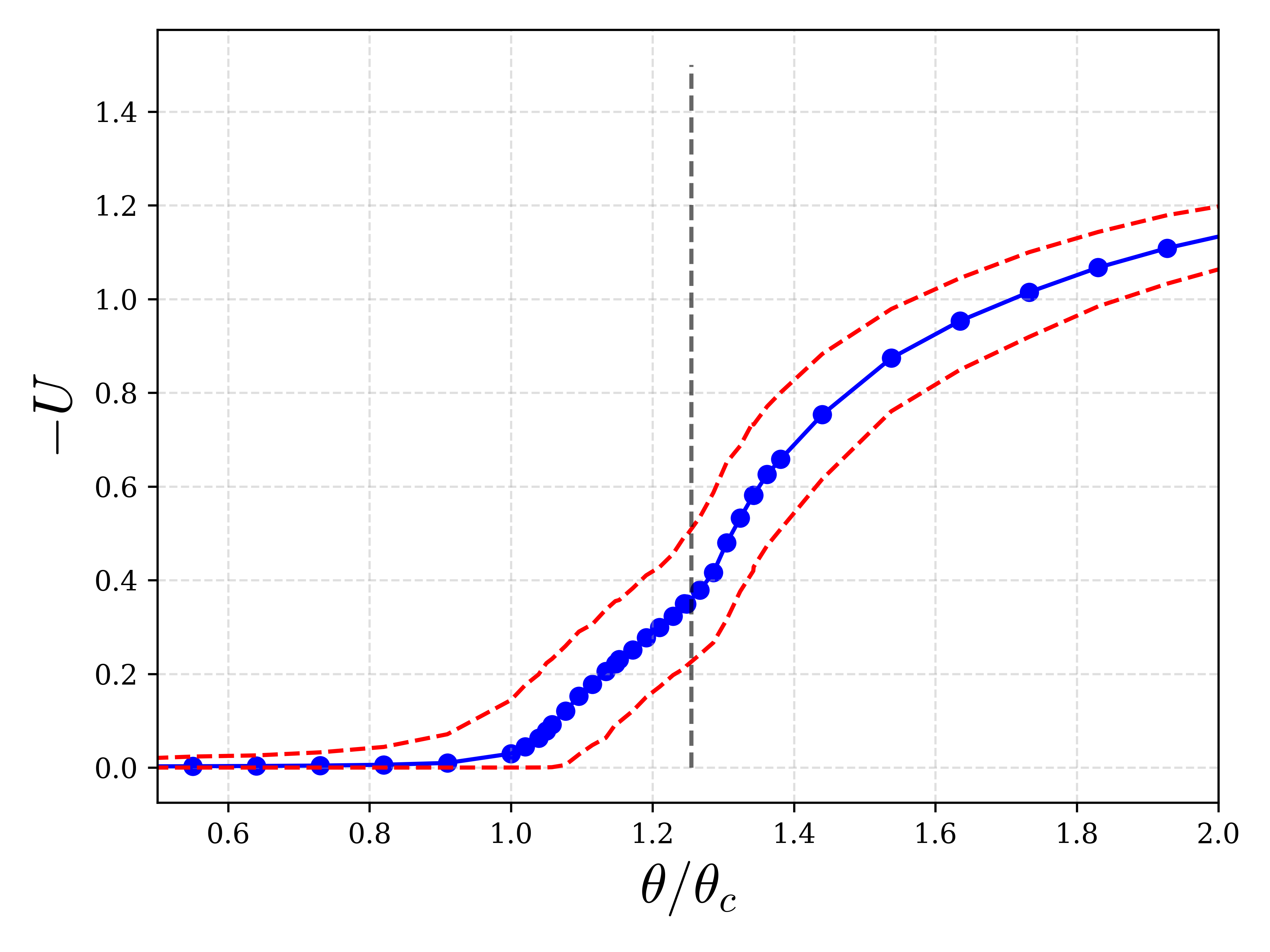}
    \caption{Phase diagram for the bi-harmonic interaction potential on Erdős-Rényi graph with a finer interaction strength grid. The vertical dashed line represent the analytical value of the secondary bifurcation point. Other graphical conventions as in Figure \ref{fig: Kuramoto}.}
    \label{fig: ER secondary bifurcartion}
\end{figure}
\subsection{Quadrichromatic potential}
\begin{figure}[ht]
\centering
\begin{subfigure}[b]{0.515\textwidth}
    \centering
    \includegraphics[width=\textwidth]{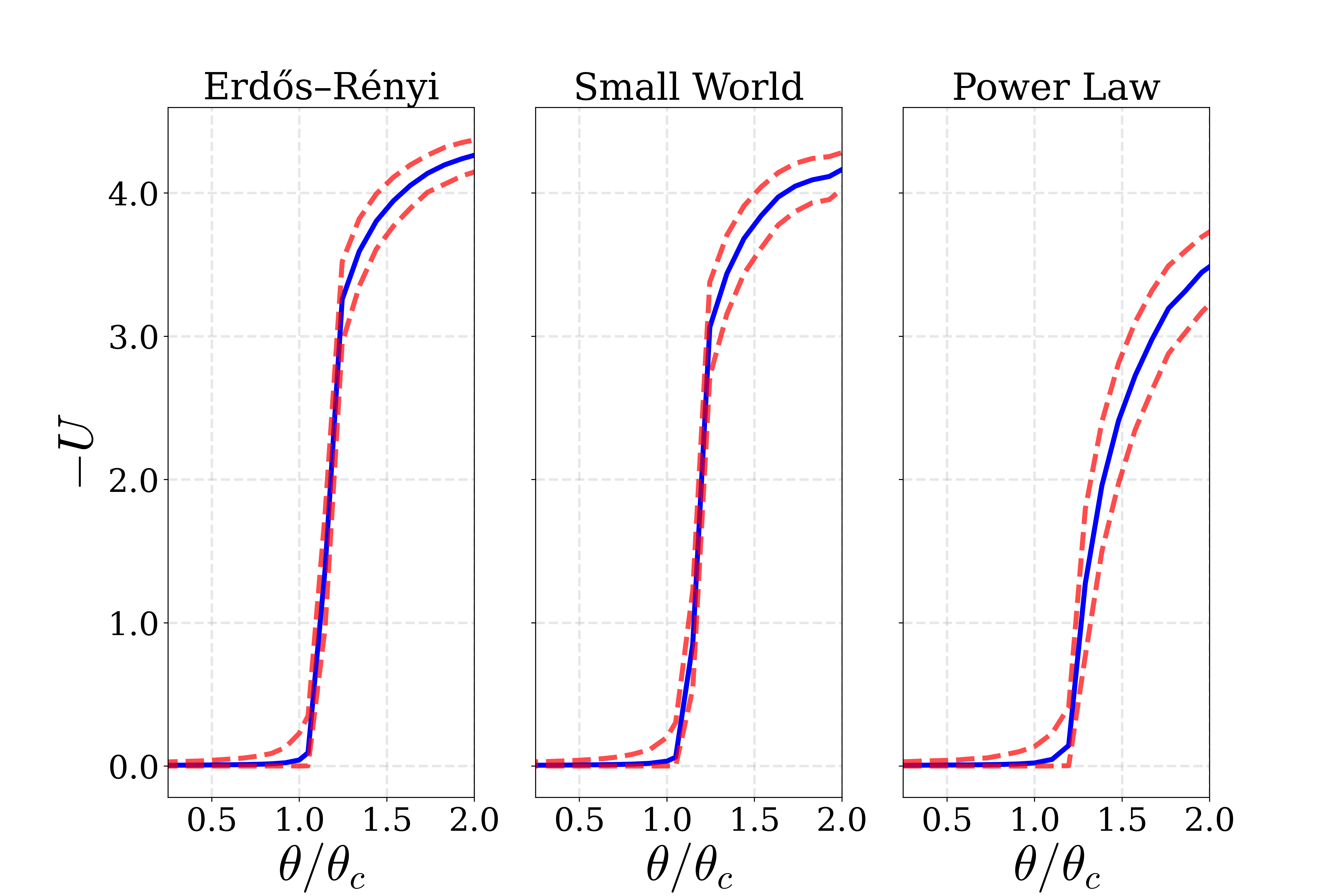}
    \caption{}
    \label{fig: Phase diag Quadri}
\end{subfigure}
\hfill
\begin{subfigure}[b]{0.475\textwidth}
    \centering
    \includegraphics[width=\textwidth]{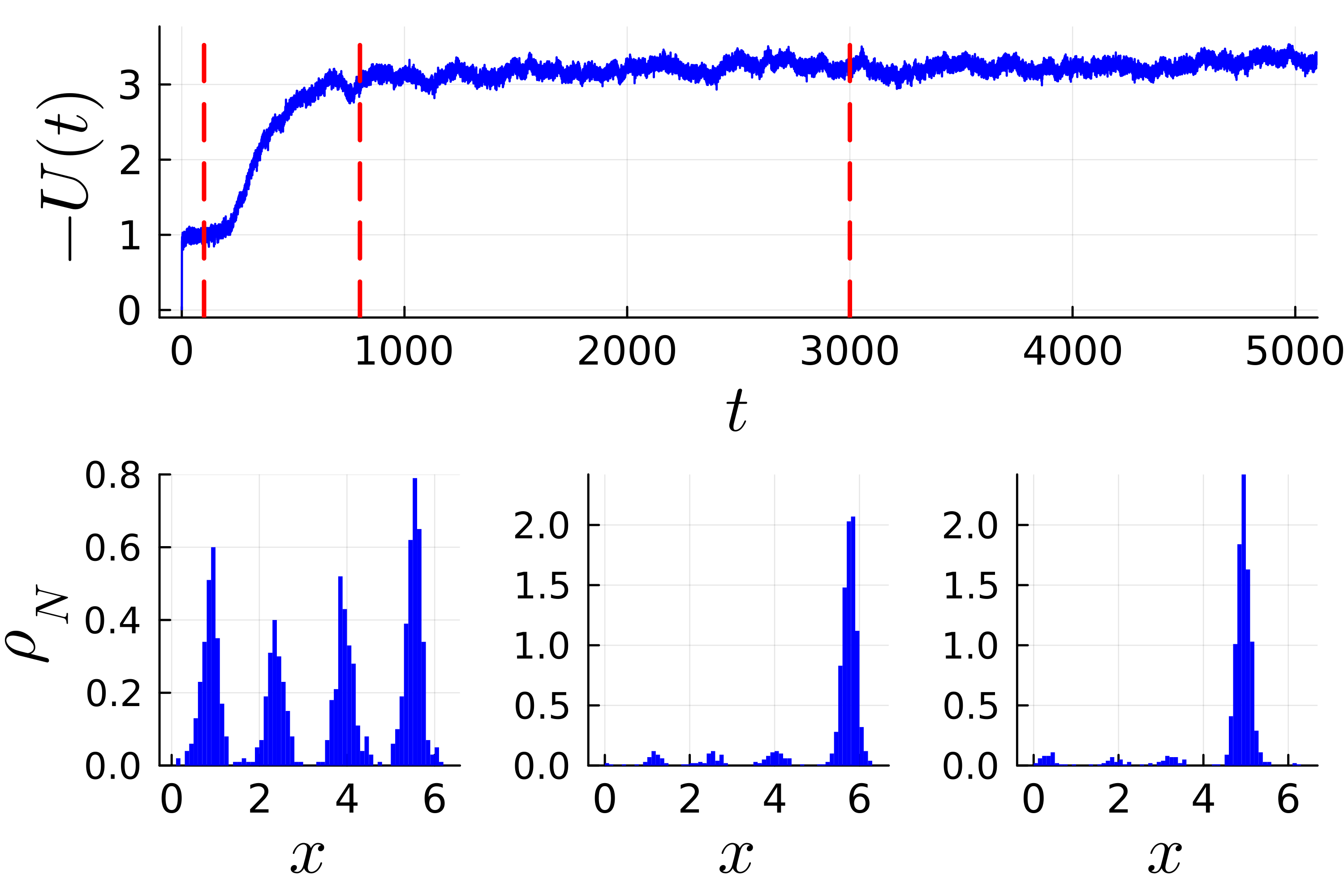}
    \caption{}
    \label{fig: Metastability Quadri}
    \end{subfigure}
\caption{(a): Phase Diagrams for the quadri-harmonic interaction potential. (b): (top panel) Time evolution of a typical trajectory for $U(t)$ after the phase transition, for a PL graph for $\theta / \theta_c \approx 1.96$. (bottom panel) Empirical measure $\rho_N$ of the system at selected times $t=100,800,3000$. Graphical conventions as in Figure \ref{fig: Kuramoto}.}
\label{fig: Quadri}
\end{figure}
Here, we consider the dynamics prescribed by the quadrichromatic potential $D(u) = - \cos(u) - 2 \cos(2u) - 3\cos(3u) -4\cos(4u)$. As with the bichromatic interaction, the potential $D(u)$ has several local minima and a global minimum at $u=0$. Consequently, we expect that the $4-$peak solution is metastable, and that it persists over long time intervals. In fact, the more negative Fourier modes we add to the interaction potential, the closer we get to the case where the system exhibits a discontinuous phase transition, since the resonance condition from~\cite[Thm 1.3(b)]{carrillo2020long} is almost satisfied. Therefore, it is not surprising that the dynamics is dominated by dynamical metastability, a common feature of systems exhibiting discontinuous phase transitions.

Panel \protect \subref{fig: Phase diag Quadri} of Figure \ref{fig: Quadri} shows the phase diagram of the energy $U$, and corroborates our theoretical results regarding the value of the critical interaction strength $\theta_c$ given by \eqref{eq: critical interaction strength}. 
As opposed to the previous sections, here we observe a less smooth change in the steepness of the curve, with an initial slow increase of the energy $U$ near $\theta / \theta_c =  1$ followed by a sudden steep increase. 
This is due to the strong metastability properties exhibited by the quadrichromatic potential, which complicates the numerical investigation of the stationary properties of the system near the phase transition.
On the one side, panel \protect \subref{fig: Metastability Quadri} shows that, far from the phase transition ($\theta / \theta_c \approx 1.96$), a typical energy trajectory will initially fluctuate around $-U \approx 1$ and then transition to a lower energy state $-U \approx 3$. 
The empirical measure of the system is characterised by four peaks in the metastable state, whereas its asymptotic profile is characterised by a single, clustered state. This provides further numerical evidence that the stable solution of the $N-$particle system with a multichromatic potential is a one-peak density of particles.
\begin{figure}[ht]
    \centering
    \begin{subfigure}[b]{0.49\textwidth}
        \centering
        \includegraphics[width=\textwidth]{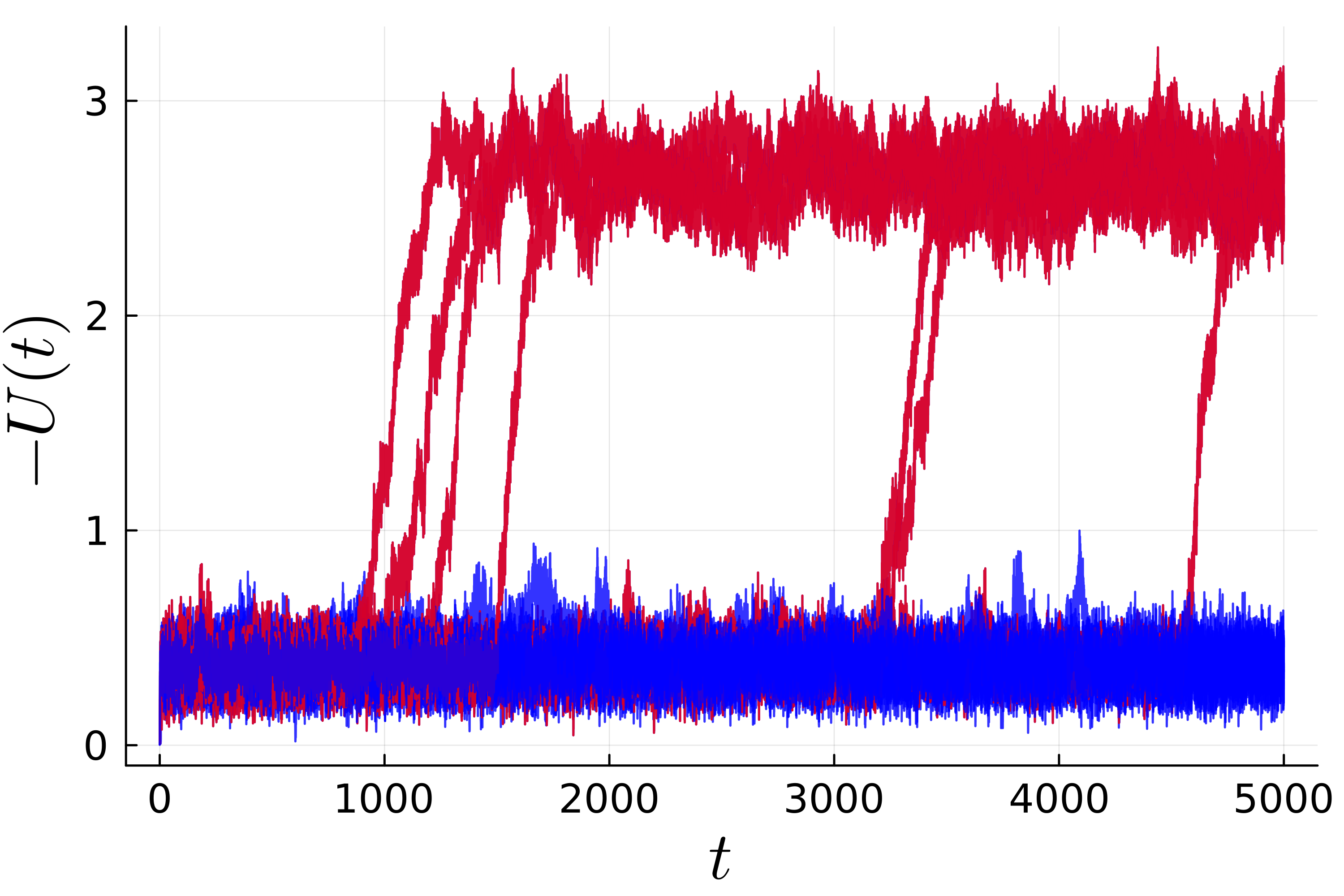}
        \caption{}
        \label{fig: Energy ER near Phase}
    \end{subfigure}
    \hfill
    \begin{subfigure}[b]{0.49\textwidth}
        \centering
        \includegraphics[width=\textwidth]{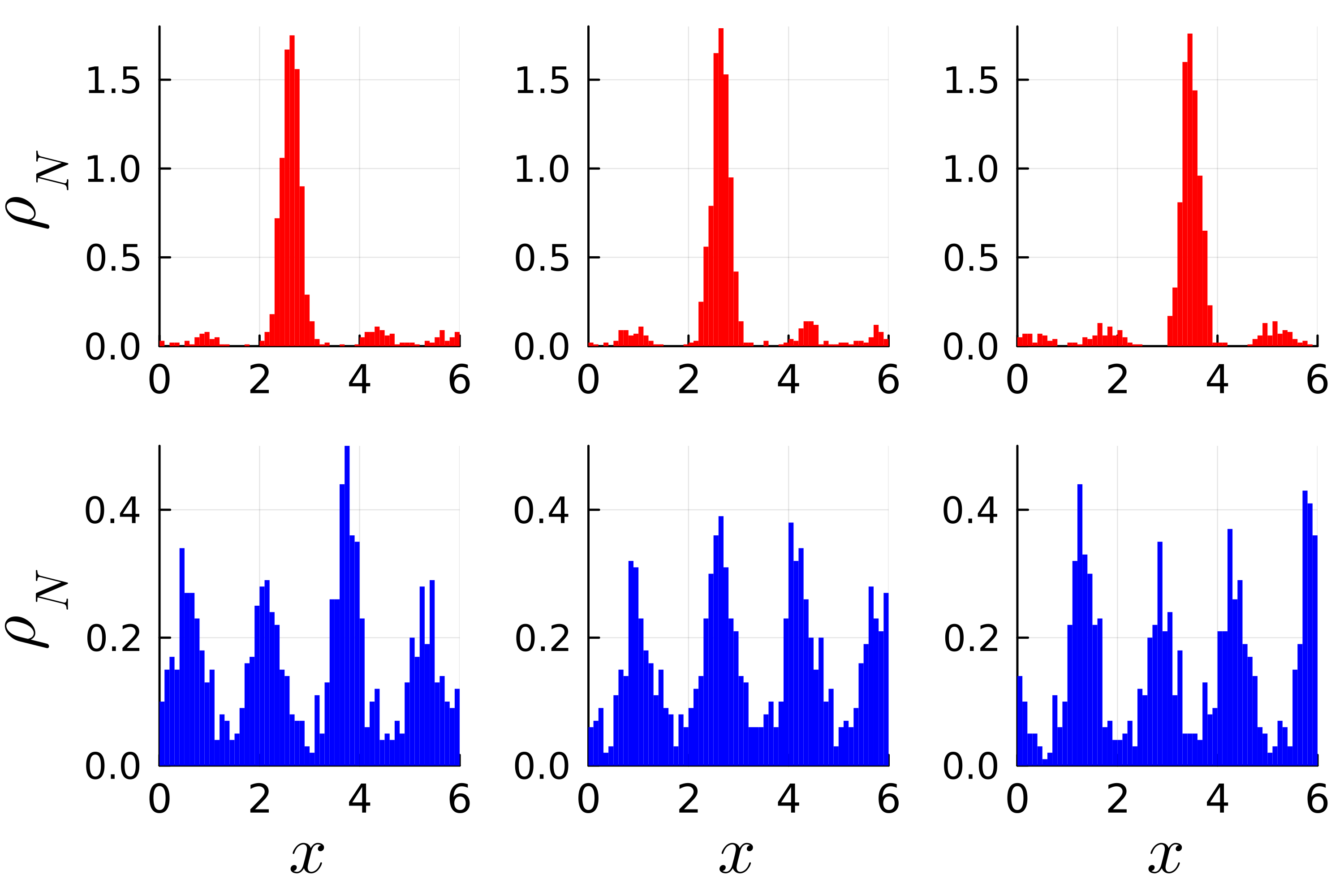}
        \caption{}
        \label{fig: Emp Measure ER}
        \end{subfigure}
    \caption{Metastability features for the quadri-harmonic potential on ER graphs. Panel (a): Energy of the system as a function of time. Panel (b): Empirical measure of the system at the end of the simulation time $T=5000$ for selected trajectories. Trajectories that have (not yet) transitioned to the final one-peak state are represented in red (blue). Here, $\theta / \theta_c \approx 1.14$}
    \label{fig: Metastability_Near_Phase_ER}
    \end{figure}
On the other side, just above the phase transition, the dynamics is slower due to the critical slowing-down characterising continuous phase transitions. 
Here, the phase diagrams have been obtained by setting $T = 5000$ for the ER and SW graphs and $T = 10000$ for the PL graph (in all cases $t_{tr} = 0.9 T$). 
For all values of $\theta$ far from the phase transition point, we have found this to be a good choice, as it allows ample time for the system to reach its asymptotic, one-peak state.  
Panel \protect \subref{fig: Energy ER near Phase} of Figure \ref{fig: Metastability_Near_Phase_ER} shows typical trajectories of the energy $U(t)$ for the ER graph and settings near the phase transition ($\theta / \theta_c \approx 1.14$). 
We observe that, fixed $T = 5000$, some trajectories (in red), initially fluctuating around $-U \approx 0.4$, have transitioned to a lower energy state $-U \approx 2.7$. In contrast, other trajectories (in blue) have yet to make the transition. 
The lower energy state corresponds to the one-peak state as shown in red in panel \protect \subref{fig: Emp Measure ER}, whereas the trajectories that have not yet escaped the metastable state are characterised by a four-peak empirical measure (in blue), which is extremely long-lived due to the critical slowing down. 
One could potentially construct a \textit{rectified} phase diagram by averaging the energy only on the trajectories that have reached the one-peak state. Still, we have preferred here to consider all trajectories to highlight the important effects of the long-lived, metastable states. 
Interestingly, for similar values of $\theta / \theta_c$, we have observed no trajectories escaping the four-peak metastable state for the PL graph. 
A careful analysis of the statistics of escape times and metastability properties of the system would go beyond the scope of this paper and we leave it for future work.  

\section{Conclusions}\label{sec:Conclusions}
In this paper, we studied the effect of the underlying (random) graph topology on phase transitions for mean-field limits of stochastic interacting particle systems on random graphs. We first analysed the structure and properties of the mean-field PDE, including the existence of a gradient flow structure in an appropriate metric space and the properties of its associated free energy. We then showed, by extending the Crandall-Rabinowitz-style bifurcation theory from \cite{tamura1984asymptotic}, that the mean-field system has a bifurcation point at a specified critical interaction strength, which depends both on the interaction potential and on the underlying graph structure. The study of bifurcations and, in particular, the identification of the critical interaction strengths for primary and secondary bifurcations of mean-field solutions is based on spectral analysis of the graphon integral operator. In addition, we complemented this spectral analysis with a self-consistency formulation that provides an alternative perspective on the stationary states and bifurcations. This framework revealed secondary transitions on non-uniform branches, offering a more complete characterization of the stability landscape of multichromatic interaction potentials on random graphs. We applied our theoretical findings to several examples of random graphs and interaction potentials.
%and examined the value of the critical interaction strength in this system by calculating the eigenvalues of an appropriate integral operator. 
Finally, we performed extensive, highly resolved, numerical simulations of the $N$-particle system; in particular, we explored the dynamical metastability of interacting particle systems with multichromatic interaction potentials on random graphs.

The work presented in this paper can be extended in several directions. We mention here a few problems that we are currently exploring. First, it would be interesting to consider the effect of a confining potential, together with the graphon structure. Second, we can consider the underdamped Langevin dynamics and study the effect of inertia (in particular, in the low friction regime) on the dynamics. The detailed analysis of the stability of different stationary states via the study of the spectrum of the linearized McKean-Vlasov operator, extending the results from~\cite{bertoli2025} is also of interest. As demonstrated in this work, interacting particle systems on graphs that exhibit phase transitions are characterized by dynamical metastability. The rigorous, systematic study of this phenomenon is a topic of great interest. 

\paragraph{Acknowledgments} BB is funded by a studentship from the Imperial College London EPSRC DTP in Mathematical Sciences Grant No. EP/W523872/1. GP is partially supported by an ERC-EPSRC Frontier Research Guarantee through Grant No. EP/X038645, ERC Advanced Grant No. 247031 and a Leverhulme Trust Senior Research Fellowship, SRF$\backslash$R1$\backslash$241055. NZ has been supported by the Wallenberg Initiative on Networks and Quantum Information (WINQ). GP is grateful to Andr\'{e} Schlichting and Rishabh Gvalani for useful discussions.
\bibliographystyle{plain} 
\bibliography{biblio}
\end{document}